\documentclass[10pt,a4paper]{amsart}
\usepackage[english]{babel}
\usepackage[all]{xy}
\usepackage{amsthm}
\usepackage{amssymb,xypic,amscd,graphics,amsmath}
\CompileMatrices

\newtheorem{defn}{Definition}
\newtheorem{thm}[defn]{Theorem}
\newtheorem{cor}[defn]{Corollary}
\newtheorem{lem}[defn]{Lemma}

\theoremstyle{remark}
\newtheorem{rem}[defn]{Remark}
\theoremstyle{remark}
\newtheorem{exam}{Example}
\numberwithin{equation}{section} \numberwithin{defn}{section}


\newcommand\aut{\operatorname{Aut}}

\newcommand\gl{\operatorname{Gl}}

\newcommand\id{\operatorname{Id}}
\renewcommand\det{\operatorname{det}}

\newcommand\ed{\operatorname{End}}

\newcommand\Det{\operatorname{det}}
\newcommand\tr{\operatorname{Tr}}

\newcommand\res{\operatorname{Res}}

\renewcommand\deg{\operatorname{deg}}

\renewcommand\tilde{\widetilde}

\renewcommand\O{{\mathcal O}}

\begin{document}

\title[General Reciprocity Law]{A General Reciprocity Law for Symbols \\ on Arbitrary Vector Spaces}
\author{Fernando Pablos Romo}
\address{Departamento de
Matem\'aticas and Instituto Universitario de F\'{\i}sica Fundamental y Matem\'aticas, Universidad de Salamanca, Plaza de la Merced 1-4,
37008 Salamanca, Espa\~na} \email{fpablos@usal.es}
\keywords{Symbols, Reciprocity Laws, Vector Spaces}
\thanks{2010 Mathematics Subject Classification: 11R56, 15A03, 19F15.
\\ (*) Corresponding author. \\ This work is partially supported by the
Spanish Government research contract no. MTM2012-32342.}

\begin{abstract} The aim of this work is to offer a general theory of reciprocity laws for symbols
on arbitrary vector spaces, and to show that classical explicit reciprocity laws
are particular cases of this theory (sum of valuations on a complete curve, Residue Theorem, Weil Reciprocity Law and the Reciprocity Law for the Hilbert Norm Residue Symbol). Moreover,
several reciprocity laws introduced over the past few years by D. V. Osipov, A. N. Parshin, I. Horozov, I. Horozov - M. Kerr and the author -together with D. Hern\'andez Serrano-, can also be deduced from this general expression.
\end{abstract}
\maketitle

\tableofcontents
\setcounter{tocdepth}1

\section{Introduction}

      In 1968 J. Tate \cite{Ta} gave a definition
of the residues of differentials on curves in terms of traces of
certain linear operators on infinite-dimensional vector spaces.
Furthermore, he proved the residue theorem (the additive
reciprocity law) from the finiteness of the cohomology groups
$H^0(C,{\mathcal O}_C)$ and $H^1(C,{\mathcal O}_C)$.

    A few years later, in 1971, J. Milnor \cite{Mi} defined the
tame symbol $(\cdot, \cdot)_v$ associated with a discrete
valuation $v$ on a field $F$. Explicitly, if $A_v$ is the
valuation ring, $p_v$ is the unique maximal ideal and $k_v =
{A_v}/{p_v}$ is the residue class field,
 Milnor defined $(\cdot, \cdot)_v \colon F^\times\times F^\times \to k_v^\times$
by $$(f, g)_v) = (-1)^{v(f)\cdot v(g)} \frac {f^{v(g)}}{g^{v(f)}}
(\text {mod }p_v).$$

(Here and below $R^\times$ denotes the multiplicative group of a
ring $R$ with unit.)

    This definition generalizes the definition of the multiplicative
local symbol given by J. P. Serre in \cite{Se}. If $C$ is a
complete, irreducible and non-singular curve over an algebraically
closed field $k$, and $\Sigma_C$ is its field of functions, the
expression
$$\prod_{x\in C} (-1)^{v_x(f)\cdot v_x(g)} \frac
{f^{v_x(g)}}{g^{v_x(f)}} (x) = 1$$\noindent for all $f,g \in
\Sigma_C^\times$ is the Weil Reciprocity Law.

  In 1989, Arbarello, De Concini and
V.G. Kac -\cite{ACK}- provided a new definition of the tame symbol
of an algebraic curve from the commutator of a certain central
extension of groups and, analogously to Tate's construction, they
deduced the Weil Reciprocity Law from the finiteness of its
cohomology. More recently, in \cite{Pa2} the author has obtained a
generalization of this result for the case of a complete curve
over a perfect field. In both cases, similar to Tate's proof of
the Residue Theorem, the reciprocity laws were proved by using
``ad hoc'' arguments from the properties of complete algebraic
curves.

    Furthermore, using similar arguments, the reciprocity law for the Hilbert Norm Residue Symbol (for definition see \cite{Sc}) was
deduced by the author in \cite{Pa7}.

    In all of these cases, there exists a set $X$ of subspaces of a $k$-vector space $V$, a group $G$, and
a symbol $f$, that is a map $f\colon X \longrightarrow G$, satisfying the conditions that $f$ is independent of commensurability and $$f(A)\cdot f(B) = f(A+B) \cdot f(A\cap B)\, ,$$ for all $A,B \in X$.

    The goal of the present work is to offer a general reciprocity law
for symbols $f\colon X \longrightarrow G$, such that several reciprocity laws will be
particular cases of this general theory. Indeed, classical explicit reciprocity laws
are particular cases of this theory (sum of valuations on a complete curve, Residue Theorem, Weil Reciprocity Law and the Reciprocity Law for the Hilbert Norm Residue Symbol). Moreover,
several reciprocity laws introduced over the past few years by D. V. Osipov (\cite{Os}) , A. N. Parshin (\cite{Par}), I. Horozov (\cite{Ho}), I. Horozov - M. Kerr (\cite{HoK}) and the author -together with D. Hern\'andez Serrano- (\cite{HP}), can also be deduced from this general expression.

    The organization of the paper is as follows. Section \ref{s:prel}
contains a brief summary of the results on the commutators and the index
appearing in \cite{ACK} and \cite{Pa2} (Subsection \ref{ss:tame-esther}), on the definition and properties of the abstract
residue introduced in \cite{Ta} (Subsection \ref{ss:abst-resi}), and an approach to the Segal-Wilson pairing according to the statements of \cite{HP} (Subsection \ref{ss:Segal-Sil-Esther}).

Finally, Section
\ref{s:explicit} is devoted to the main results of this work:
to prove the General Reciprocity Law (Theorem \ref{thm:GRLs}),
and, as an application, to show that explicit reciprocity laws
can be deduced from the general expression. As particular cases of the statement of this Theorem we
obtain proofs for the sum of valuations of a function on a complete curve (Subsection \ref{sss:sum-valuations}); for a reciprocity
law of the symbol $\nu_{x,C}$ on a surface studied by D. V. Osipov (Subsection \ref{ss:sum-nu-surface}); for the Weil Reciprocity Law (Subsection \ref{sss:curve}); for the reciprocity
law of the Hilbert Norm Residue Symbol (Subsection \ref{sss:Hilbert-norm-reciprocity}); for reciprocity laws of the refinament of the Parshin symbol given recently by Horozov, and of the Parshin symbol on a surface (Subsection \ref{ss:Horozov-Parshin}); for the reciprocity law of the Horozov-Kerr 4-function local symbol (Subsection \ref{ss:Horozov-Kerr}); for the Residue Theorem (Subsection \ref{sss:residue-theorem-proof}), and for the reciprocity law of the Segal-Wilson pairing (Subsection \ref{ss:Segal-Wilson-reciprocity}).

It should be noted that the General Reciprocity Law (Theorem \ref{thm:GRLs}) is valid for each vector space over an arbitrary
field and, therefore, it should be possible to study other
reciprocity laws of symbols or to deduce new explicit expressions
in Algebraic Number Theory using this method.

\section{Preliminaries} \label{s:prel}

\subsection{Tame symbol on $k$-vector spaces.}\label{ss:tame-esther}
\label{ss:commensurability}

        Let $k$ be an arbitrary field, let $V$ be a $k$-vector space and let $A, B$ be two
$k$-vector subspaces of $V$.

\begin{defn} \label{d:comm0} $A$ and $B$ are said to be
commensurable if $A + B /A\cap B$ is a finite dimensional vector
space over $k$. We write $A\sim B$ to denote commensurable
subspaces -\cite{Ta}-.
\end{defn}

     In 1989, fixing a vector subspace, $V_+ \subset V$,
if $k$ is algebraically closed  and $$\gl (V,V_+) = \{f\in
{\aut}_k(V) \text { such that } f(V_+)\sim V_+\}\, ,$$\noindent E. Arbarello, C. de Concini and V.-G. Kac \cite{ACK}
constructed a determinantal central extension of groups:
\begin{equation}\label{e:greg} 1\to k^\times \longrightarrow {\widetilde {\gl (V,V_+)}}
\overset {\pi} \longrightarrow \gl (V,V_+) \to 1\,
,\end{equation}\noindent and they used this extension to study the
tame symbol on an algebraic curve.

    In 2002, the author of this work generalized the results of \cite{ACK}
to the case of vector spaces over an arbitrary ground field
-\cite{Pa2}-.

    Given an element $f\in \gl (V,V_+)$, the ``index of $f$ over $V_+$'' is the integer
number:
$$i(f,V_+,V) = \text {dim}_k({V_+}/{V_+\cap fV_+}) - \text {dim}_k(
{fV_+}/{V_+\cap fV_+})\, .$$

If $\text{dim}_k (V_+) < \infty$, it is clear that $i(f,V_+,V) = 0$.

    We denote by $\{\cdot, \cdot\}^{V}_{V^+}$ the
 commutator of the central extension (\ref{e:greg}); that is, if $\sigma$ and $\tau$ are two
commuting elements of $\gl (V,V_+)$ and ${\tilde \sigma}, {\tilde
\tau}\in {\widetilde {\gl (V,V_+)}}$ are elements such that $\pi
({\tilde \sigma}) = \sigma$ and $\pi ({\tilde \tau}) = \tau$, then
one has a commutator pairing:
$$\{\sigma,\tau\}_{V}^{V_+} = {\tilde \sigma}\cdot {\tilde \tau}\cdot {\tilde \sigma}^{-1} \cdot {\tilde
\tau}^{-1} \in k^\times\, .$$

\begin{defn} \label{def:tame-vector} For an arbitrary vector subspace $V_+ \subseteq V$, we shall use the term ``tame symbol
associated with $V_+$'' to refer to the expression
$$<\sigma,\tau>_{V_+}^V = (-1)^{i(\sigma,V_+,V)\cdot i(\tau,V_+,V)}\{\sigma,\tau\}_{V_+}^V \in k^{\times}\, ,$$\noindent for all commuting elements $\sigma, \tau \in \gl (V,V_+)$.
\end{defn}

\begin{rem} \label{rem:curve} Let $C$ be an irreducible, non-singular and complete curve
over a perfect field $k$, and let $\Sigma_C$ be its function field.
Similar to the above, each closed point $x\in C$ corresponds to a discrete valuation ring
$\O_x$ with field of fractions $K = \Sigma_C$ that contains $k$.
Write $A_x = \hat {\O_x}$, the completion of $\O_x$ and write
$K_x$ for the field of fractions of $A_x$ (which is the completion
of $\Sigma_C$ with respect to the valuation defined by $\O_x$).
With these notations, it
follows from \cite{Pa2} (Section 5) that, similar to \cite{ACK},
we have a central extension of groups
$$1 \to k^\times \to {\tilde {\gl}}(K_x, A_x) \to \gl (K_x, A_x) \to
1\, ,$$\noindent which, since $\Sigma_C^*\subseteq \gl (K_x,
A_x)$, induces by restriction another determinantal central
extension of groups:
$$1 \to k^\times \to {\tilde {{\Sigma}_C^\times}} \to {\Sigma}_C^\times\to 1\,
,$$\noindent whose commutator, for all ${f},{g} \in
{\Sigma}_C^\times$, is:
$$\{{f}, {g}\}_{A_x}^{K_x} =
N_{k(x)/k}[\frac{{f}^{v_x({g})}}{{g}^{v_x({f})}}(x)] \in
k^\times\, ,$$\noindent where $k(x)$ is the residue class field of
the closed point $x$ and $N_{k(x)/k}$ is the norm of the extension
$k\hookrightarrow k(x)$.

    Hence, if $\deg(x) = \text{dim}_k k(x)$, according to \cite{Pa2} the tame symbol associated with a closed
point $x\in C$ is the map
$$<\cdot, \cdot>_x\colon {\Sigma}_C^\times \times
{\Sigma}_C^\times \longrightarrow k^\times\, ,$$ \noindent defined
by:
$$<f,g>_x = (-1)^{\deg(x)\cdot v_x(f)\cdot v_x(g)}\cdot
N_{k(x)/k}[\frac{{f}^{v_x({g})}}{{g}^{v_x({f})}}(p)]$$\noindent
for all  $f,g \in {\Sigma}_C^\times$.

    When $x$ is a rational point of $C$, this definition coincides
with the tame symbol associated with the field ${\Sigma}_C$ and
the discrete valuation $v_x$ (the multiplicative local symbol
-\cite{Se}-).
\end{rem}

\subsubsection{Properties of $\{\cdot,\cdot\}^{V}_{V_+}$ and $<\cdot,\cdot>^{V}_{V_+}$.} \label{sss:reaezd}

    Let us set elements $\sigma,\sigma',\tau,\tau'\in \gl (V,V_+)$
such that the $\sigma$'s commute with the $\tau$'s. (But we need
assume neither that $\sigma\sigma'=\sigma'\sigma$ nor that
$\tau\tau'=\tau'\tau$.) Hence, according to the statements of
 \cite{ACK} and \cite{Pa2}, the
following relations hold:\\

\begin{enumerate}
\item $\{\sigma,\sigma\}^{V}_{V_+}=1$.\\
\item $\{\sigma,\tau\}^{V}_{V_+}=\left(\{\tau,\sigma\}^{V}_{V_+}\right)^{-1}$.\\

\item $\{\sigma\sigma',\tau\}^{V}_{V_+}
=\{\sigma,\tau\}^{V}_{V_+}\cdot \{\sigma',\tau\}^{V}_{V_+}$.\\

\item $\{\sigma,\tau\tau'\}^{V}_{V_+}
=\{\sigma,\tau\}^{V}_{V_+}\cdot \{\sigma,\tau'\}^{V}_{V_+}$.\\

\item If  $\sigma V_{+} = V_{+} = \tau V_{+}$, then we have:
\begin{equation} \label{eqprop:equal}
\{\sigma,\tau\}^{V}_{V_+}= 1 \, . \end{equation}

\item If $V_{+}=\{0\}$ or
$V_{+}=V$, then $\{\sigma,\tau\}^{V}_{V_+}=1$ and
\begin{equation} \label{eprop:rqerq} <\sigma,\tau>^{V}_{V_+}=1\, .\end{equation}

\item If $V_{+} \subseteq {\tilde V} \subseteq V$ and $\sigma
{\tilde V}  = {\tilde V} = \tau {\tilde V}$, then $\{\sigma,\tau\}^{V}_{V_+}= \{\sigma,\tau\}^{{\tilde V}}_{V_+}$ and
\begin{equation} \label{eqprop:recrt-equal}
<\sigma,\tau>^{V}_{V_+}= <\sigma,\tau>^{{\tilde V}}_{V_+} \, .
\end{equation}

\item $\{\sigma,\tau\}^{V}_{V_+}$ depends only on the
commensurability class of $V_{+}$.\\

\item If $A$ and $B$ are two $k$-vector subspaces of $V$, for
all commuting elements $\sigma, \tau \in \gl({V},A)\cap
\gl({V},B)$ one has that $\sigma, \tau \in \gl({V},A+B)\cap
\gl({V},A\cap B)$ and:
\begin{equation} \label{eq:moni} \{\sigma, \tau\}_{A}^{V}\cdot \{\sigma, \tau\}_{B}^{V}
= (-1)^{\beta}\cdot
 \{\sigma, \tau\}_{A+B}^{V} \cdot \{\sigma, \tau\}_{A\cap
B}^{V}\, ,\end{equation}\noindent where $$\begin{aligned}\beta =
i(\tau,A,V)\cdot i(\sigma,B,V) +
 i(\tau,A,V)&\cdot i(\sigma,B,V)  \\ + i(\tau,A+B,V)\cdot i(\sigma,A\cap B,V) +
 i(\tau,A\cap B,&V)\cdot i(\sigma,A+B,V)\,
.\end{aligned}$$\\

\item If $A$ and $B$ are two $k$-vector subspaces of $V$, for
all commuting elements $\sigma, \tau \in \gl({V},A)\cap
\gl({V},B)$ one has that \begin{equation} \label{eq:moni-izqe}{\bf  <\sigma,\tau>_A^V \cdot <\sigma,\tau>_B^V = <\sigma,\tau>_{A+B}^V \cdot <\sigma,\tau>_{A\cap B}^V\, .}\end{equation}

\end{enumerate}

Also, from expression (\ref{eq:moni}) it may be deduced that:

\begin{lem} \label{lem:izq-mon} If $V = V_+ + {\tilde V}$ and $\tau, \sigma \in \gl
(V,V_+)$ with $\sigma ({\tilde V}) = \tau ({\tilde V}) = {\tilde
V}$, then $\tau, \sigma \in \gl ({\tilde V},V_+ \cap {\tilde V})$
and $$\{\tau, \sigma\}_{V_+}^{V} = \{\tau, \sigma\}_{V_+\cap
{\tilde V}}^{{\tilde V}}\, .$$
\end{lem}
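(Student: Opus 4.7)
The plan is to apply equation (\ref{eq:moni}) to the pair of subspaces $A = V_+$ and $B = \tilde V$. Since by hypothesis $A + B = V_+ + \tilde V = V$ and $A \cap B = V_+ \cap \tilde V$, that identity rewrites as
$$\{\sigma, \tau\}_{V_+}^V \cdot \{\sigma, \tau\}_{\tilde V}^V \;=\; (-1)^{\beta}\,\{\sigma, \tau\}_V^V \cdot \{\sigma, \tau\}_{V_+ \cap \tilde V}^V.$$
The strategy is to trivialize three of the four commutator factors and the sign, and then transport the remaining factor from $V$ down to $\tilde V$.

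First, $\{\sigma, \tau\}_V^V = 1$ by property~(6) (the case $V_+ = V$), and $\{\sigma, \tau\}_{\tilde V}^V = 1$ by property~(5), since the hypothesis $\sigma(\tilde V) = \tilde V = \tau(\tilde V)$ lets us use $\tilde V$ in place of $V_+$ in (\ref{eqprop:equal}). Next, I would argue that $(-1)^{\beta} = 1$: every element of $\gl(V,V_+)$ is in particular an automorphism of $V$, so $i(\sigma, V, V) = i(\tau, V, V) = 0$ straight from the definition of the index. Taking $A+B = V$, this kills the last two summands of $\beta$; the first two summands are visibly equal to one another, so their sum is even. Hence the sign collapses.

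Finally, to convert $\{\sigma, \tau\}_{V_+ \cap \tilde V}^V$ into $\{\sigma, \tau\}_{V_+ \cap \tilde V}^{\tilde V}$, I would invoke property~(7) with the chain $V_+ \cap \tilde V \subseteq \tilde V \subseteq V$, which is legitimate precisely because $\sigma(\tilde V) = \tilde V = \tau(\tilde V)$. Combining all the reductions yields
$$\{\sigma,\tau\}_{V_+}^V \;=\; \{\sigma,\tau\}_{V_+\cap \tilde V}^{\tilde V},$$
which is the asserted equality.

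Before applying property~(7), I still have to justify $\sigma,\tau \in \gl(\tilde V, V_+ \cap \tilde V)$. That $\sigma$ and $\tau$ restrict to automorphisms of $\tilde V$ is immediate from $\sigma(\tilde V) = \tau(\tilde V) = \tilde V$. For the commensurability condition, $\sigma(V_+ \cap \tilde V) = \sigma(V_+) \cap \tilde V$ (since $\sigma$ preserves $\tilde V$), and intersecting the finite dimensional quotient $(\sigma V_+ + V_+)/(\sigma V_+ \cap V_+)$ with $\tilde V$ keeps it finite dimensional, which yields $\sigma(V_+\cap \tilde V)\sim V_+\cap \tilde V$ inside $\tilde V$. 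The main obstacle is really the sign computation, and it dissolves as soon as one notices that the index vanishes identically on the full ambient space $V$; the rest is bookkeeping with the properties collected in Subsection~\ref{sss:reaezd}.
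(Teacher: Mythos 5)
Your proof is correct and follows exactly the route the paper intends: the lemma is stated there as a consequence of expression (\ref{eq:moni}) applied to $A=V_+$, $B=\tilde V$, with the factors $\{\sigma,\tau\}_V^V$ and $\{\sigma,\tau\}_{\tilde V}^V$ and the sign killed by properties (5), (6) and the vanishing of the relevant indices, and the final passage from $V$ to $\tilde V$ given by property (7). No essential difference from the paper's (unwritten but clearly indicated) argument.
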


\subsubsection{Properties of the index $i(f,V_+,V)$.}

    One has that:
\begin{enumerate}
 \item   If $A$ and $B$ are two vector subspaces of $V$, and $\sigma \in \gl({V},A)\cap
\gl({V},B)$, a straightforward computation from the above expressions (\ref{eq:moni}) and (\ref{eq:moni-izqe}) implies that
\begin{equation} \label{eq:index} i(\sigma, A, V) + i(\sigma, B, V) = i(\sigma, A + B, V) + i(\sigma, A\cap B, V)\, .\end{equation}

\item If $\sigma \in \gl({V},A)$ and ${\tilde V}\subset V$ is a subspace, such that ${\tilde V}$ is invariant by $\sigma$ and $A \subset {\tilde V}$, it is clear that
\begin{equation} \label{eqprop:index-recrt-equal} i(\sigma, A, {\tilde V}) = i(\sigma, A, V)\, . \end{equation}

\item If $A$ is a subspace of $V$, and $\sigma (A) = A$, then \begin{equation} \label{eqprop:index-equal} i(\sigma, A, V) = 0\, .\end{equation}

\end{enumerate}

Moreover, bearing in mind that $i(\sigma, V_+,V) = 0$ when $\text{dim}_k (V_+) < \infty$, one has that

\begin{lem}\label{lem:aldehuela-3} If $A$ and $B$ are two commensurable vector subspaces of $V$, $A\sim B$, and $\sigma \in \gl({V},A)$, then
\begin{equation} \label{eq:index-invariant} i(\sigma, A, V) = i(\sigma, B, V)\, . \end{equation}
In particular, if $V_+ \sim V$ or $V_+ \sim \{0\}$, then $i(\tau, V_+, V) = 0$ for all $\tau \in \gl (V,V_+)$.
\end{lem}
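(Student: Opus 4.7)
The plan is to first observe that $\sigma \in \gl(V,B)$ as well (so that $i(\sigma, B, V)$ is defined), and then to recast the index as an Euler characteristic on the commensurability class of $V_+$, for which the claimed equality becomes transparent. For the first point, the hypotheses $\sigma A \sim A$, $A \sim B$, together with the fact that the automorphism $\sigma$ carries the commensurability $A \sim B$ to $\sigma A \sim \sigma B$, give $\sigma B \sim B$ by transitivity of $\sim$.

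Next, for any commensurable pair $W_1 \sim W_2$ in $V$ define
$$\chi(W_1, W_2) \;=\; \dim_k\bigl(W_1/(W_1 \cap W_2)\bigr) \;-\; \dim_k\bigl(W_2/(W_1 \cap W_2)\bigr),$$
so that by definition $i(\sigma, A, V) = \chi(A, \sigma A)$ and $i(\sigma, B, V) = \chi(B, \sigma B)$. Two standard properties of $\chi$ are required: (a) additivity $\chi(W_1, W_3) = \chi(W_1, W_2) + \chi(W_2, W_3)$ on any pairwise commensurable triple, and (b) invariance $\chi(\sigma W_1, \sigma W_2) = \chi(W_1, W_2)$ under any $\sigma \in \aut_k(V)$. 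Property (b) is immediate because $\sigma$ restricts to an isomorphism $W_1 \cap W_2 \to \sigma W_1 \cap \sigma W_2$; property (a) follows by picking any $U$ containing $W_1 + W_2 + W_3$ with each $U/W_i$ finite-dimensional and writing $\chi(W_i, W_j) = \dim_k(U/W_j) - \dim_k(U/W_i)$. Combined with the skew-symmetry $\chi(W_1, W_2) = -\chi(W_2, W_1)$ built into the definition, one obtains
$$\chi(A, \sigma A) \;=\; \chi(A, B) + \chi(B, \sigma B) + \chi(\sigma B, \sigma A) \;=\; \chi(A, B) + \chi(B, \sigma B) + \chi(B, A) \;=\; \chi(B, \sigma B),$$
which is exactly \eqref{eq:index-invariant}.

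For the ``in particular'' statements: if $V_+ \sim V$, then applying the first part with $A = V_+$ and $B = V$ gives $i(\tau, V_+, V) = i(\tau, V, V)$, and the latter vanishes by \eqref{eqprop:index-equal} since $\tau(V) = V$. If instead $V_+ \sim \{0\}$, then $V_+$ is finite-dimensional and the index vanishes by the remark recorded just after its definition. The only delicate point in the argument is the additivity (a) of $\chi$, a routine diagram chase that is in fact the sole place where the commensurability hypotheses on the $W_i$ are substantively used.
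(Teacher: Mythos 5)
Your proof is correct, but it takes a different route from the one the paper intends. The paper derives the lemma from the additivity formula \eqref{eq:index} together with the vanishing of the index on finite-dimensional subspaces: one reduces to the nested case $B\subseteq A$ with $\dim_k(A/B)<\infty$, writes $A=B\oplus F$ with $F$ finite-dimensional, and applies \eqref{eq:index} to the pair $B,F$ to get $i(\sigma,B,V)+i(\sigma,F,V)=i(\sigma,A,V)+i(\sigma,\{0\},V)$, whence the claim since the two outer terms vanish. Note that \eqref{eq:index} is itself obtained in the paper from the commutator identities, so this route quietly rides on the central-extension machinery. You instead work directly from the definition, recasting $i(\sigma,A,V)$ as the relative dimension $\chi(A,\sigma A)$ and establishing the cocycle identity $\chi(W_1,W_3)=\chi(W_1,W_2)+\chi(W_2,W_3)$ together with $\aut_k(V)$-invariance; the telescoping $\chi(A,\sigma A)=\chi(A,B)+\chi(B,\sigma B)+\chi(\sigma B,\sigma A)$ then gives \eqref{eq:index-invariant} at once. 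Your preliminary observation that $\sigma B\sim\sigma A\sim A\sim B$, so that $i(\sigma,B,V)$ is even defined, is a point the paper leaves implicit and is worth stating. What your approach buys is self-containedness and a cleaner structural explanation (the index is the coboundary of the relative-dimension cocycle on a commensurability class); what the paper's buys is brevity, given that \eqref{eq:index} is already available. Your handling of the two special cases (via \eqref{eqprop:index-equal} when $V_+\sim V$, and via finite-dimensionality when $V_+\sim\{0\}$) matches the paper's. The only cosmetic quibble is your closing remark that additivity is the sole place where commensurability is used substantively: it is also needed for $\chi$ itself to be defined, though this does not affect the argument.
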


Furthermore, a direct consequence of the definition of the index and expression (\ref{eq:index}) is:

\begin{lem} \label{lem:index-izq-mon} If $V = V_+ + {\tilde V}$ and $\sigma \in \gl
(V,V_+)$ with $\sigma ({\tilde V}) = {\tilde
V}$, then $\sigma \in \gl ({\tilde V},V_+ \cap {\tilde V})$
and $$i(\sigma,{V_+},{V}) = i(\sigma,{V_+\cap
{\tilde V}},{{\tilde V}})\, .$$
\end{lem}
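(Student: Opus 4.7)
The plan is to read off the identity directly from equation (\ref{eq:index}) applied to $A=V_+$ and $B=\tilde V$, namely
$$i(\sigma,V_+,V) + i(\sigma,\tilde V,V) = i(\sigma,V_++\tilde V,V) + i(\sigma,V_+\cap\tilde V,V).$$
The hypotheses are tailored to collapse two of the four terms. Indeed, $V_++\tilde V=V$ and $\sigma(V)=V$, so equation (\ref{eqprop:index-equal}) gives $i(\sigma,V,V)=0$; likewise $\sigma(\tilde V)=\tilde V$ forces $i(\sigma,\tilde V,V)=0$. The displayed identity then degenerates to $i(\sigma,V_+,V) = i(\sigma,V_+\cap\tilde V,V)$.

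To obtain the equality as stated in the lemma I would then switch the ambient space on the right-hand side from $V$ to $\tilde V$. Since $V_+\cap\tilde V\subseteq\tilde V$ and $\tilde V$ is $\sigma$-stable, equation (\ref{eqprop:index-recrt-equal}) applies and gives $i(\sigma,V_+\cap\tilde V,V) = i(\sigma,V_+\cap\tilde V,\tilde V)$, completing the computation.

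The only other thing to check is the side assertion that $\sigma\in\gl(\tilde V,V_+\cap\tilde V)$. The restriction $\sigma|_{\tilde V}$ is an automorphism of $\tilde V$ because $\sigma(\tilde V)=\tilde V$ and $\sigma\in\aut_k(V)$; moreover $\sigma(V_+\cap\tilde V)=\sigma(V_+)\cap\tilde V$. The commensurability $\sigma(V_+)\sim V_+$, together with the observation that the natural map
$$\frac{\bigl(\sigma(V_+)\cap\tilde V\bigr)+\bigl(V_+\cap\tilde V\bigr)}{\sigma(V_+)\cap V_+\cap\tilde V}\;\longrightarrow\;\frac{\sigma(V_+)+V_+}{\sigma(V_+)\cap V_+}$$
is injective into a finite-dimensional space, delivers $\sigma(V_+\cap\tilde V)\sim V_+\cap\tilde V$ inside $\tilde V$.

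I do not expect any real obstacle: the argument is essentially bookkeeping with equations (\ref{eq:index}), (\ref{eqprop:index-recrt-equal}) and (\ref{eqprop:index-equal}). The mildest subtlety is the transfer of commensurability from $V$ to $\tilde V$ needed to justify that $\sigma$ indeed lies in $\gl(\tilde V,V_+\cap\tilde V)$, but this follows from a straightforward modular-law computation as indicated above.
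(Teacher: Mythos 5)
Your proposal is correct and is essentially the argument the paper intends: the lemma is stated there as "a direct consequence of the definition of the index and expression (\ref{eq:index})," and your chain — apply (\ref{eq:index}) to $A=V_+$, $B=\tilde V$, kill two terms via (\ref{eqprop:index-equal}), then change ambient space via (\ref{eqprop:index-recrt-equal}) — is exactly that, with the added (correct) verification that $\sigma(V_+\cap\tilde V)\sim V_+\cap\tilde V$, which the paper leaves implicit.
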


\subsection{Abstract Residue.}\label{ss:abst-resi}

    Let $k$ again be an arbitrary field, let $V$ be a $k$-vector space and let $A, B$ be two
$k$-vector subspaces of $V$.

    If we set $A<B$ when $(A+B)/B$ is finite dimensional, it is clear that $A$ and $B$ are commesurable, $A\sim B$, if
and only if $A<B$ and $A<B$. Commensurability is an equivalent
relation on the set of $k$-vector subspaces of $V$.

    Let us now consider an endomorphism $\varphi$ of $V$. We say
that $\varphi$ is ``finite potent" if $\varphi^n V$ is finite
dimensional for some $n$, and a trace $\tr_V(\varphi) \in k$ may
be defined, with the following properties:
\begin{enumerate}
\item If $V$ is finite dimensional, then $\tr_V(\varphi)$ is the
ordinary trace.

\item If $W$ is a subspace of $V$ such that
$\varphi W \subset W$, then $$\tr_V(\varphi) = \tr_W(\varphi) +
\tr_{V/W}(\varphi)\, .$$

\item If $\varphi$ is nilpotent, then
$\tr_V(\varphi) = 0$.
\end{enumerate}

\begin{rem}\label{r:chartraces} Properties (1), (2) and (3)
characterize traces, because if $W$ is a finite dimensional
subspace of $V$ such that $\varphi W \subset W$ and $\varphi^n V
\subset W$, for some $n$, then $\tr_V(\varphi) = \tr_W(\varphi)$.
And, since $\varphi$ is finite potent, we may take $W = \varphi^n
V$.
\end{rem}
    Let us now fix a subspace $A$ of $V$, and then define subspaces $E, E_0, E_1,
E_2$ of $\ed_k (V)$ by
\begin{itemize}
\item $\varphi \in E \Longleftrightarrow \varphi A < A$, \item
$\varphi \in E_1 \Longleftrightarrow \varphi V < A$, \item
$\varphi \in E_2 \Longleftrightarrow \varphi A < (0)$, \item
$\varphi \in E_0 \Longleftrightarrow \varphi V < A \text{ and }
\varphi A < (0)$.
\end{itemize}

    J. Tate (\cite{Ta}) showed that $E$ is a $k$-subalgebra of
$\ed (V)$, the $E_i$ are two-sided ideals in $E$, the $E$'s depend
only on the $\sim$-equivalence class of $A$, $E_1\cap E_2 = E_0$,
$E_1 + E_2 = E$, and $E_0$ is finite potent (i.e. there exists an n such that for any family
of n elements $\varphi_1, \dots, \varphi_n \in E_0$ the space $\varphi_1 \cdots \varphi_n V$ is finite dimensional). Moreover, if we
assume either $f\in E_0$ and $g\in E$, or $f\in E_1$ and $g\in
E_2$, then the commutator $[f,g] = fg - gf$ is in $E_0$ and has
zero trace.

    Let $K$ be a commutative $k$-algebra, $V$ a $K$-module, and
$A$ a $k$-subspace of $V$ such that $fA< A$ for all $f\in K$.
With the above notations, $K$ operates on $V$ through $E\subset
\ed_k (V)$.

\begin{defn}[Abstract Residue] \label{defn:abst-res} There exists a unique $k$-linear
``residue map" $$\res_A^V\colon \Omega_{K/k}^1\longrightarrow
k$$\noindent such that for each pair of elements $f$ and $g$ in
$K$ one has that $$\res_A^V (fdg) = \tr_V ([f_1,g_1])$$\noindent
for every pair of endomorphisms $f_1$ and $g_1$ in $E$ satisfying
the following conditions:
\begin{itemize}
\item Both $f\equiv f_1$ (mod $E_2$) and $g\equiv g_1$ (mod
$E_2$); \item Either $f_1 \in E_1$ or $g_1 \in E_1$.
\end{itemize}
\end{defn}

\begin{rem} \label{r:rescurve} Let $C$ be a non-singular and
irreducible curve over a perfect field $k$ and let $\Sigma_C$ be
its function field. With the notation of Remark \ref{rem:curve}, we have a map $$\res_{A_x}^{K_x}\colon \Omega_{K/k}^1 \to k\, .$$
When $x$ is a rational point of $C$, one has that $A_x \simeq
k[[t]]$, $K_x \simeq k((t))$ and $\res_x(fdg)$ is the classical
residue $\res_x(fdg)$; that is, it is the coefficient of $t^{-1}$ in
$f(t)g'(t)$.

In general, if $k(x)$ is the residue field of the closed point $x$, the classical residue $\res_x(fdg)$ values in $k(x)$, and the explicit expression
of this map is $$\res_{A_x}^{K_x} (fdg) = \tr_{k(x)/k} [\res_x(fdg)]\, ,$$\noindent where $\tr_{k(x)/k}$ is the trace of the finite extension of fields $k\hookrightarrow k(x)$.
\end{rem}

\subsubsection{Properties of $\res_A^V$.}

If $g,h \in K$, it follows from the statements of \cite{Ta} that:

\begin{enumerate}
\item \label{prop:one} If $A \subset V' \subset V$ and $KV'\subset
V'$, then
\begin{equation} \label{prop:res-t-nm} \res_A^V (gdh) = \res_A^{V'} (gdh)\, .\end{equation}

\item \label{prop:two} $\res_A^V (gdh) =
\res_{A'}^V (gdh)$ if $A\sim A'$.

\item If $g = 0$ or $h = 0$, then $\res_A^V(gdh) = 0$.

\item Let $g\in K$. Then, $\res_A^V (g^n dg) = 0$ for all integers
$n\geq 0$ and, moreover, the same holds for all $n\leq -2$ if $g$
is invertible in $K$. In particular, $\res_A^V (dg) = 0$ for all
$g\in K$.

\item If $g$ is invertible in $K$, and $h\in K$ such that
$hA\subset A$, then
$$\res_A^V(hg^{-1}dg) = \tr_{A/A\cap gA}(h) - \tr_{gA/A\cap
gA}(h)\, .$$\noindent In particular, if $g$ is invertible and $gA
\subset A$, then $$\res_A^V(g^{-1}dg) = \text{ dim}_k(A/gA)\, .$$

Moreover, if $gA = A$, then \begin{equation} \label{eqprop:res-equal} \res_A^V (gdh) = 0\, .\end{equation}

\item If $B$ is another $k$-subspace of $V$ such that $fB < B$ for
all $f\in K$, then $f(A+B) < A+B$ and $f(A\cap B) < A\cap B$ for
all $f\in K$, and one has that
\begin{equation} \label{eq:basic}{\bf \res_A^V (gdh) + \res_B^V (gdh) = \res_{A+B}^V (gdh) + \res_{A\cap B}^V (gdh)}\,
.\end{equation}

 \item Let $K'$ be a commutative $K$-algebra that
is a free $K$-module of finite rank. Let $V' = K'\otimes_k V$ and
let $A' = \sum_i x_i \otimes A \subset V'$, where $(x_i)$ is a
$K$-base for $K'$. Then $f'A'\sim A'$ for all $f'\in K'$, the
$\sim$-equivalence class of $A'$ depends only on that of $A$, not
on the choice of basis $(x_i)$, and one has $$\res_{A'}^V(f'dg) =
\res_A^V((\tr_{K'/K}f')dg) \text{ for } f'\in K' \text{ and } g\in
K\, .$$
\end{enumerate}

Moreover, it follows from the above properties and from expression (\ref{eq:basic}) that

\begin{lem} \label{lem:res-izq-mon} If $V = A + {\tilde V}$ and $gA < A$, $hA < A$, with $g({\tilde V}) = h({\tilde V}) = {\tilde
V}$ or $g({\tilde V}) = h({\tilde V}) = \{0\}$ , then $g(A \cap {\tilde V}) < A \cap {\tilde V}$, $h(A \cap {\tilde V}) < A \cap {\tilde V}$
and $$\res_{A}^{V} (gdh) = \res_{A\cap
{\tilde V}}^{{\tilde V}} (gdh)\, .$$
\end{lem}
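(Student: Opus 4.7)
The plan is to deduce the stated equality from the additivity identity \eqref{eq:basic}, applied with the second subspace taken to be $\tilde V$. Both branches of the hypothesis on $g$ and $h$ force $g\tilde V\subset\tilde V$ and $h\tilde V\subset\tilde V$, so in particular $g\tilde V<\tilde V$ and $h\tilde V<\tilde V$, which makes \eqref{eq:basic} applicable to the pair $(A,\tilde V)$. Using $A+\tilde V=V$, it reads
\[
\res_A^V(gdh)+\res_{\tilde V}^V(gdh)=\res_V^V(gdh)+\res_{A\cap\tilde V}^V(gdh),
\]
and the task reduces to showing that the two extra terms on the right vanish and that the last term coincides with $\res_{A\cap\tilde V}^{\tilde V}(gdh)$.

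For the identification of the last term I would invoke property \eqref{prop:res-t-nm} with intermediate subspace $V'=\tilde V$; its $K$-stability is exactly the hypothesis that $g$ and $h$ preserve $\tilde V$. For $\res_V^V(gdh)$ I would note that when the chosen subspace is the full ambient space one has $E_1=\ed_k(V)$, so the defining formula of the residue allows the direct choice $f_1=g$, $g_1=h$; commutativity of $K$ then gives $[g,h]=0$ as an endomorphism, and $\tr_V(0)=0$. For the first extra term $\res_{\tilde V}^V(gdh)$, the branch $g\tilde V=h\tilde V=\tilde V$ is an immediate instance of \eqref{eqprop:res-equal}.

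The remaining subcase $g\tilde V=h\tilde V=\{0\}$ is the only point not covered verbatim by a previously listed property of $\res$, and is where I expect the real work to lie. Here I would return to the definition: since $g$ sends $\tilde V$ into the zero-dimensional space, the endomorphism $g$ lies in the ideal $E_2$ attached to $\tilde V$, so one may replace the ``first slot'' $f_1$ by $0\in E_1$; the defining formula then yields $\res_{\tilde V}^V(gdh)=\tr_V([0,h])=0$. Finally, the ancillary statements $g(A\cap\tilde V)<A\cap\tilde V$ and $h(A\cap\tilde V)<A\cap\tilde V$ are trivial when $g\tilde V=\{0\}$, and in the case $g\tilde V=\tilde V$ follow from $g(A\cap\tilde V)\subset gA\cap\tilde V$ together with the second isomorphism theorem, which identifies $(gA\cap\tilde V)/(gA\cap A\cap\tilde V)$ with a subspace of the finite-dimensional quotient $(gA+A)/A$.
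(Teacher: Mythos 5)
Your proof is correct and follows essentially the route the paper intends, namely deducing the lemma from the additivity identity (\ref{eq:basic}) applied to the pair $(A,\tilde V)$, together with $\res_V^V(gdh)=0$, the vanishing of $\res_{\tilde V}^V(gdh)$ in both branches, and the restriction property (\ref{prop:res-t-nm}) — the paper offers no further detail beyond citing these facts. The only caveat, which does not affect correctness, is that (\ref{prop:res-t-nm}) and (\ref{eq:basic}) are stated for subspaces stable under all of $K$ while you only assume $g$ and $h$ preserve $\tilde V$; since $\res_A^V(gdh)$ depends only on $g$ and $h$, one simply replaces $K$ by the subalgebra they generate.
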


\subsection{The Segal-Wilson pairing}\label{ss:Segal-Sil-Esther}

\subsubsection{Definition on Hilbert spaces.}

Let $H$ be a separable complex Hilbert space with a given
decomposition $H = H_+\oplus H_-$ as the direct sum of two
infinite dimensional orthogonal closed subspaces.

Let us write the operators $g\in \gl (H)$ in the block form
$$g = \begin{pmatrix} a & b \\ c & d \end{pmatrix}$$\noindent with
respect to the decomposition $H = H_+\oplus H_-$. The
\textit{restricted general linear group} $\gl_{res}(H)$ is the
closed subgroup of $\gl (H)$ consisting of operators $g$ whose
off-diagonal blocks $b$ and $c$ are compact operators. The blocks
$a$ and $d$ are therefore automatically Fredholm.

    If $\Gamma$ denotes the group of continuous maps $S^1 \longrightarrow {\mathbb C}^\times$, we can consider the subgroup $\Gamma_+$ of $\Gamma$
consisting of all-real analytic functions $f\colon S^1
\longrightarrow {\mathbb C}^\times$ that extend to holomorphic
functions $f\colon D_0 \longrightarrow {\mathbb C}^\times$ in the
disc $D_0 = \{z\in {\mathbb C} : \vert z\vert \leq 1\}$\linebreak satisfying
$f(0) = 1$, and the subgroup $\Gamma_-$ of $\Gamma$ consisting of
functions $f$, which extend to non-vanishing holomorphic functions
in $D_{\infty} = \{z\in {\mathbb C} \cup \infty : \vert z\vert
\geq 1\}$ satisfying $f(\infty) = 1$.

    Let $\gl_{1}(H)$ now be the subgroup of $\gl_{res}(H)$ consisting
of invertible operators $g = \begin{pmatrix} a & b \\ c & d
\end{pmatrix} \in \gl_{res}(H)$ and where the blocks $b$ and $c$ are
of trace class. If $\gl_{1}(H)^0$ is the identity component of
$\gl_{1}(H)$, then there exists a central extension
of groups: \begin{equation*}  1 \to {\mathbb
C}^\times \longrightarrow \gl_1^{\wedge} \longrightarrow
\gl_{1}(H)^0 \to 1\, .\end{equation*}

    If we consider the open set $\gl_{1}^{reg}$ of $\gl_{1}(H)^0$
where $a$ is invertible, there exists a section $s\colon
\gl_{1}^{reg} \to \gl_1^{\wedge}$ of the projection
$\gl_1^{\wedge} \longrightarrow \gl_{1}(H)^0$ that induces a
2-cocycle $(\cdot, \cdot) \colon \gl_{1}^{reg} \times
\gl_{1}^{reg} \longrightarrow {\mathbb C}^\times$, defined as:
\begin{equation}\label{eq:s-g-cocy} (g_1,g_2) \longmapsto \Det (a_1 a_2 a_3^{-1})\, ,\end{equation}\noindent where
$g_i =
\begin{pmatrix} a_i & b_i \\ c_i & d_i \end{pmatrix}$ and $g_3 =
g_1 g_2$.

   Note that $\gl_{1}^{reg}$ is not a group, and the map $s$ is
not multiplicative.

    Let us now consider the subgroup  $\gl_{1}^{+}$ of
$\gl_{1}^{reg}$ consisting of elements whose block decomposition
has the form $\begin{pmatrix} a & b \\ 0 & d
\end{pmatrix}$. Thus, the restriction of $s$ to $\gl_{1}^{+}$ is an inclusion of
groups $\gl_{1}^{+} \hookrightarrow \gl_1^{\wedge}$ and one can
regard $\gl_{1}^{+}$ as a group of automorphisms of the bundle
$\det$. Similar remarks apply to the subgroup $\gl_{1}^{-}$
consisting of the elements of $\gl_{1}^{reg}$ whose block
decomposition has the form $\begin{pmatrix} a & 0 \\ c & d
\end{pmatrix}$.

    Now, for every subgroup $G,{\tilde G} \subset \gl_{1}^{reg}$ such that the action of $G_1$ and $G_2$
commute with each other it is possible to define a map
\begin{equation*} \label{eq:SW-pairing} \begin{aligned}(\cdot,\cdot)_{G,{\tilde G}}^{SW}\colon G
\times {\tilde G} &\longrightarrow {\mathbb C}^\times \\
(g,{\tilde g}) &\longmapsto \Det (a{\tilde a}a^{-1}{\tilde
a}^{-1})\, ,\end{aligned}\end{equation*}\noindent where $g =
\begin{pmatrix} a & b \\ c & d
\end{pmatrix}\in G$ and ${\tilde g} =
\begin{pmatrix} {\tilde a} & {\tilde b} \\ {\tilde c} & {\tilde d}
\end{pmatrix}\in {\tilde G}$. We have that the commutator has a determinant because, from the fact that
$g$ and $\tilde g$ commute, it is equal to $1 - b{\tilde
c}a^{-1}{\tilde a}^{-1} + {\tilde b}ca^{-1}{\tilde a}^{-1}$, and
b, c, ${\tilde b}$ and ${\tilde c}$ are of trace class by the
definition of $\gl_{1}^{reg}$.

Hence this map is well-defined, and
we shall call it ``the Segal-Wilson pairing'' associated with
$G$ and ${\tilde G}$.

    Thus, if $g\in \Gamma_+$ and ${\tilde g}\in \Gamma_-$, with the above block decomposition,  a
computation shows that:
\begin{equation}\label{e:SWpairing}
({\tilde g}, g)_{\Gamma_-,
\Gamma_+}^{SW}= \Det ({\tilde a} a {\tilde a}^{-1} a^{-1}) = \exp
(\text{trace} [\alpha, {\tilde \alpha}])\, ,
\end{equation}

\noindent where $g =
\exp (f)$, ${\tilde g} = \exp ({\tilde f})$, and $\alpha$ and
$\tilde \alpha$ are the $H_+ \to H_+$ blocks of $f$ and $\tilde f$
respectively. For details, readers are referred to \cite{SW}.

\subsubsection{Definition on arbitrary vector spaces.} \label{sss:arbitrary-Segal-Wilson}

    Let $k$ be a field of characteristic zero. Similar to Subsection \ref{ss:abst-resi}, let $K$ be a commutative $k$-algebra, $V$ a $K$-module, and
$A$ a $k$-subspace of $V$ such that $fA< A$ for all $f\in K$.

    Let $k((z))$ be the field of Laurent series and let us denote $V_{z}:=V\otimes_{k}k((z))$ as a $k((z))$-vector space.

    Let $\varphi \in \ed_{k}(V)$ be a finite potent endomorphism and let us denote $\varphi \otimes 1$ the induced endomorphism of $V_{z}$. Then, there exists a well-defined exponential map:
\begin{align*}
\exp_{z} \colon \ed_{k}(V) & \to \aut_{k((z))}(V_{z})\\
\varphi & \mapsto \exp_{z}(\varphi)=\id \otimes 1+z(\varphi \otimes 1) +\frac{z^2(\varphi \otimes 1)^2}{2}+\frac{z^3(\varphi \otimes 1)^3}{3!}+\cdots
\end{align*}
Moreover, the endomorphism of $V_{z}$:
$$\bar \varphi=\exp_{z}(\varphi)-\id \otimes 1= z(\varphi \otimes 1) +\frac{z^2(\varphi \otimes 1)^2}{2}+\frac{z^3(\varphi \otimes 1)^3}{3!}+\cdots $$
is finite potent.

From the definition of determinants for finite potent endomorphisms offered in \cite{HP}, we have a well-defined determinant:

$$\Det_{V_{z}}^{k((z))}\big (\exp_{z}(\varphi)\big)=\exp_{z}(\tr_{V}(\varphi))\in k((z))^\times \,.$$

    Hence, for each $f,g\in K$, the function:
 \begin{align*}
c_{V_{+}}^V\colon K\times K&\to k((z))^\times \\
(f,g)&\mapsto c_{V_{+}}^V(f,g):=\Det_{V_{z}}^{k((z))}\big(\exp_{z}(f_{1})\exp_{z}(g_{1})\exp_{z}(-(f_{1}+g_{1}))\big)
\end{align*}
is a $2$-cocycle of $K$ with coefficients in $k((z))^\times$, for every pair endomorphisms $f_{1}$ and $g_{1}$  in $E$ satisfying:
\begin{itemize}
\item $f\equiv f_{1}$ (mod $E_{2}$) and $g\equiv g_{1}$ (mod $E_{2}$);
\item Either $f_{1}\in E_{1}$ or $g_1\in E_{1}$.
\end{itemize}
In particular, there exists a central extension of groups:
$$1\to k((z))^\times \to  \widetilde K_{V_{+}}^V\to K\to 1\,.$$

\begin{rem}
If we write $f=\begin{pmatrix} a & b \\ c& d \end{pmatrix}$ with respect to a fixed decomposition of $V=V_{+}\oplus V_{-}$, we can assume  that $f_{1}=\begin{pmatrix} a & 0 \\ 0& 0 \end{pmatrix}$. Therefore, the definition of the cocycle $c_{V_{+}}^V$ is inspired by Segal and Wilson's cocycle of \cite[Prop.3.6]{SW} (see also equation (\ref{eq:s-g-cocy})).
Moreover:
$$\begin{aligned} c_{V_{+}}^V(f,g)&=\Det_{V_{z}}^{k((z))}\big(\exp_{z^2}(\frac{1}{2}[f_{1},g_{1}])\big)
\\ &=\exp_{z^{2}}\big(\frac{1}{2}\tr_{V}([f_{1},g_{1}])\big)=\exp_{z^{2}}\big(\frac{1}{2}\res_{V_{+}}^{V}(fdg)\big)\,.\end{aligned}$$
Thus, the cocycle $c_{V_{+}}^V$ is a multiplicative analogue for the abstract residue defined by Tate in \cite{Ta}, and has a similar shape to Segal and Wilson's pairing.
\end{rem}

The cocycle $c_{V_{+}}^V$ satisfies the following properties:

\begin{itemize}
\item If $A\sim A'$, then $c_{A}^V(f,g)=c_{A'}^V(f,g)$. That is, the cocycle depends only on the commensurability class of $V_{+}$. In particular $\widetilde K_{A}^V=\widetilde K_{A'}^V$.
\item If $f(A)\subset A$ and $g(A)\subset A$, then $c_{A}^V(f,g)=1$. In particular, the central extension $\widetilde K_{A}^V$ is trivial.
\item $c_{A}^V(1,g)=1$ for all $g\in K$.
\item  If $g$ is invertible in $K$ and $h\in K$ is such that $h(A)\subset A$, then:
$$c_{A}^V(hg^{-1},g)=\exp_{z^{2}}\big(\frac{1}{2}\tr_{A/A\cap gA}(h)\big)\cdot \exp_{z^{2}}\big(-\frac{1}{2}\tr_{gA/A\cap gA}(h)\big)\,.$$
\item  If $B$ is another $k$-subspace of $V$ such that $f(B)<B$ for all $f\in K$, then:
\begin{equation} \label{eq:aerr} {\bf c_{A+B}^V(f,g)\cdot c_{A\cap B}^V(f,g)=c_{A}^V(f,g)\cdot c_{B}^V(f,g)\,.}\end{equation}
\end{itemize}

For details, readers are referred to \cite{HP}.

\section{Main Result: General Reciprocity Law}\label{s:explicit}

Let $V$ a vector space over an arbitrary ground field $k$, and let $X$ be a set of $k$-subspaces of V. Let us assume
that $X$ is closed for sums and intersections (i.e. if $A,B \in X$, then $A+B, A\cap B \in X$).

Let $G$ be a group (with multiplicative notation).

\begin{defn} We shall use the term ``X-symbol'' to refer to a map $f\colon X \rightarrow G$ satisfying the following conditions:
\begin{itemize}
\item If $\{0\} \in X$ (resp. $V \in X$), then $f(\{0\}) = 1$ (resp. $f(V) = 1$).
\item If $A,B \in X$ and $A\sim B$, then $f(A) = f(B)$.
\item If $A,B \in X$, then \begin{equation} \label{eq:tanri} f(A)\cdot f(B) = f(A+B) \cdot f(A\cap B)\, .\end{equation}
\end{itemize}
\end{defn}

\begin{defn}\label{def:independent} A family $\{A_i\}_{i\in I} \subset X$ is called ``independent for commensurability'' when for each $i\in I$ one has that $A_i \cap [\sum_{j\ne i} A_j] \sim \{0\}$.
\end{defn}

\begin{lem} \label{l:erae} If $\{A_i\}_{i= 1}^n \subset X$ is a finite family independent for commensurability, then $$f(A_1 + \dots + A_n) = \prod_{i=1}^n f(A_i)\, .$$
\end{lem}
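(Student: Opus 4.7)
The plan is to prove the lemma by induction on $n$, using the two-element symbol relation (\ref{eq:tanri}) as the sole engine of the argument.

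The case $n=1$ is trivial. For the inductive step, assume the formula for families of size $n$ and let $\{A_i\}_{i=1}^{n+1}\subset X$ be independent for commensurability. Put $S = A_1 + \cdots + A_n$; since $X$ is closed under sums and intersections, $S\in X$, $S+A_{n+1} = A_1+\cdots+A_{n+1}\in X$ and $S\cap A_{n+1}\in X$. Applying (\ref{eq:tanri}) to $S$ and $A_{n+1}$ gives
$$f(S)\cdot f(A_{n+1}) \;=\; f(A_1+\cdots+A_{n+1})\cdot f(S\cap A_{n+1}).$$

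Two observations then finish the proof. First, the sub-family $\{A_1,\ldots,A_n\}$ is again independent for commensurability: for each $i\le n$, the subspace $A_i\cap\sum_{j\ne i,\,j\le n}A_j$ is contained in $A_i\cap\sum_{j\ne i,\,j\le n+1}A_j$, which is commensurable with $\{0\}$ by hypothesis and hence finite-dimensional, and a subspace of a finite-dimensional space is again finite-dimensional. The induction hypothesis therefore yields $f(S) = \prod_{i=1}^n f(A_i)$. Second, applying the independence hypothesis to $i=n+1$ directly gives
$$S\cap A_{n+1} \;=\; A_{n+1}\cap \sum_{j\ne n+1} A_j \;\sim\; \{0\},$$
so by commensurability invariance and the normalization $f(\{0\})=1$, one obtains $f(S\cap A_{n+1}) = 1$. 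Substituting both equalities into the displayed identity produces the desired product formula.

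The only subtle point is the use of the normalization $f(W)=1$ for every $W\in X$ commensurable with $\{0\}$: this is exactly where the two $X$-symbol axioms ($f(\{0\})=1$ and commensurability invariance) combine, and the closure of $X$ under intersections guarantees that the subspace to which we need to apply it actually lies in $X$. Beyond this, the argument is purely a bookkeeping induction on the symbol relation (\ref{eq:tanri}); no further structure of $f$ or of the ambient space $V$ is required.
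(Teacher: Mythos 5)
Your proof is correct and follows essentially the same route as the paper: an induction on $n$ via the relation (\ref{eq:tanri}), with the intersection terms $f\big((A_1+\dots+A_i)\cap A_{i+1}\big)$ killed by the independence hypothesis together with commensurability invariance and $f(\{0\})=1$. The only cosmetic difference is that the paper first records the general identity with all correction terms and then sets each equal to $1$, whereas you absorb that observation into each inductive step (and, slightly more carefully than the paper, verify that the sub-family $\{A_1,\dots,A_n\}$ remains independent for commensurability).
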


\begin{proof} By a routine induction, it follows from formula (\ref{eq:tanri}) that $$f(A_1 + \dots + A_n) = (\prod_{i=1}^n f(A_i)\cdot [\prod_{i=1}^{n-1} f((A_1 + \dots + A_i)\cap A_{i+1})]^{-1}\, .$$
    Since $\{A_i\}$ is independent for commensurability, then $(A_1 + \dots + A_i)\cap A_{i+1} \sim \{0\}$ and $$f((A_1 + \dots + A_i)\cap A_{i+1}) = 1$$\noindent for all i, from where the claim is deduced.
\end{proof}

\begin{thm}[General Reciprocity Law] \label{thm:GRLs} Let $V$, $X$ and $f$ be as above, and let $\{A_i\}_{i\in I} \subset X$ be a family of subspaces of $V$ such that $f(A_i) = 1$ for almost all i. Let us assume that for each finite subset $J \subset I$ there is given a subspace $B_J \in X$ such that $A_i \subset B_J$ for all $i\notin J$ and
\begin{itemize}
\item (a) if $J' \subset J$, then $B_{J'} = B_J + \sum_{i\in J-J'}A_i$;
\item (b) for each $J$, the subspaces $\vert J\vert$ + 1 subspaces $\{A_j\}_{j\in J}$ and $B_J$ are independent for commensurability;
\item (c) for each $J$, if $f(A_i) = 1$ for all $i\notin J$, then $f(B_J) = 1$.
\end{itemize}
Thus, $$f(B_{\emptyset}) = \prod_{i\in I} f(A_i)\, .$$
\end{thm}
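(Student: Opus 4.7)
The plan is to use the hypothesis that $f(A_i)=1$ for almost all $i$ to reduce the infinite product on the right side to a finite one, and then exploit conditions (a), (b), (c) simultaneously at the finite index set consisting of the ``bad'' indices.

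More precisely, first I would set $J_0 = \{i \in I : f(A_i) \neq 1\}$, which is a finite subset of $I$ by hypothesis. Then by construction, $f(A_i) = 1$ for every $i \notin J_0$, so condition (c) applied with $J = J_0$ yields $f(B_{J_0}) = 1$. Next, applying condition (a) with $J' = \emptyset$ and $J = J_0$ gives the decomposition
\[
B_{\emptyset} = B_{J_0} + \sum_{i \in J_0} A_i.
\]
By condition (b), the finite collection consisting of $B_{J_0}$ together with the subspaces $\{A_j\}_{j \in J_0}$ is independent for commensurability, so Lemma \ref{l:erae} applies to this collection and yields
\[
f(B_{\emptyset}) = f(B_{J_0}) \cdot \prod_{i \in J_0} f(A_i).
\]
Since $f(B_{J_0}) = 1$ and since $f(A_i) = 1$ for every $i \notin J_0$, the right-hand side equals $\prod_{i \in I} f(A_i)$ (where the infinite product is well defined precisely because only finitely many factors differ from $1$), which is the desired identity.

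There is essentially no obstacle to be overcome: the three conditions (a), (b), (c) are engineered exactly so that this three-line argument goes through. The only minor point worth double-checking is that condition (b) genuinely gives independence for commensurability of the $|J_0|+1$ subspaces (including $B_{J_0}$), which is what allows the single application of Lemma \ref{l:erae} to replace $f$ of the sum by the product of the $f$'s. The nontrivial content of the theorem is therefore not in its proof but in the axiomatization: the real work in the paper will be to verify the hypotheses (a), (b), (c) in each concrete example of a reciprocity law.
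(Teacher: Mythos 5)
Your proposal is correct and follows essentially the same route as the paper: choose a finite $J$ containing all indices with $f(A_i)\ne 1$, use (a) to write $B_{\emptyset}=B_J+\sum_{i\in J}A_i$, apply (b) together with Lemma \ref{l:erae} to get $f(B_{\emptyset})=f(B_J)\cdot\prod_{i\in J}f(A_i)$, and finish with (c). The only (immaterial) difference is that you take $J$ to be exactly the set of bad indices while the paper just takes $J$ ``large enough.''
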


\begin{proof} Let $J$ be large enough so that condition (c) holds. Then, by (a), (b) and Lemma \ref{l:erae} we have $$f(B_{\emptyset}) = [\prod_{j\in J} f(A_j)] \cdot f(B_J)\, .$$

    Hence, we conclude, bearing in mind that $f(B_J) = 1 = \prod_{i\notin J} f(A_i)$.
\end{proof}

\subsection{Explicit reciprocity laws for the index.} \label{ss:tame-symbol-explicit}

Let us consider an arbitrary vector $k$-space $V$, and let $\{V_i\}_{i\in I}$ be a family of $k$-subspaces of
$V$.

\begin{defn} We shall use the term ``the restricted linear group'' associated
with $\{V_i\}_{i\in I}$, $\gl (V, \{V_i\}_{i\in I})$ to refer to the subgroup of
${\aut}_k (V)$ defined by:
$$\gl (V, \{V_i\}_{i\in I}) = \{\sigma \in {\aut}_k (V) \text{
such that } \sigma \in \gl (V,V_i) \text { for all i }$$\noindent
$$\text{ and } \sigma (V_i) = V_i \text { for almost all i}\}\,
.$$
\end{defn}

    If $V_I = \prod_{i\in I} V$, the diagonal embedding induces a natural immersion of groups $\gl (V, \{V_i\}_{i\in I}) \hookrightarrow \aut_k (V_I)$.

Moreover, it follows from the commutative diagram

$$\xymatrix{V \ar@{^(->}[r]
 & V_I \\
{V_i \,} \ar@{^(->}[u] \ar@{^(->}[r] &
{\prod_{i\in I} V_i} \ar@{_(->}[u] \, ,}$$\noindent and from expression (\ref{eqprop:index-recrt-equal}) that
$$i(\sigma, {V_i},{V_I}) = i(\sigma, {V_i}, {V})\, ,$$\noindent for all $\sigma \in \gl (V,
\{V_i\}_{i\in I})$.

     Let us now consider the set of $k$-subspaces of $V_I$, $$X = \{\{0\}, V_I, \prod_{j\in J} V_j\}_{J\subseteq I}\, ,$$\noindent and the X-symbol
$$\begin{aligned} f^i_{\sigma}\colon X &\longrightarrow {\mathbb Z} \\ A &\longmapsto i(\sigma, {A},{V_I})\, ,\end{aligned}$$\noindent where $\sigma \in \gl (V,
\{V_i\}_{i\in I})$.

    If $B_J = \prod_{j\in I-J} V_j$, it is clear that the X-symbol $f^i_{\sigma,\tau}$ satisfies the hypothesis of Theorem \ref{thm:GRLs} for the family $\{V_i\}_{i\in I}$, and hence
\begin{equation} \label{eq:index-erada-3} i(\sigma, {\prod_{i\in I} V_i},{V_I}) = \sum_{i\in I} i(\sigma,{V_i},{V_I}) = \sum_{i\in I} i(\sigma,{V_i},{V})\, .\end{equation}

    Let us fix two families of subspaces of $V$, $\{V_i\}_{i\in I}$ and $\{W_i\}_{i\in
I}$, such that $V_i \subseteq W_i$ for all $i\in I$ and $\sigma
(W_i) = W_i$ for every $\sigma \in  \gl (V, \{V_i\}_{i\in I})$.

    Let us now write $A_{I} = \prod_{i\in I} V_i$,
$W_{I} =  \prod_{i\in I} W_j$. If $\sigma \in
\gl (V, \{V_i\}_{i\in I})$, it is clear that $\sigma \in \gl
(W_I, A_I)$ by means of the diagonal embedding.

    If we replace $V_I$ with $W_I$ in formula (\ref{eq:index-erada-3}), from Theorem \ref{thm:GRLs} we also obtain the following result:

\begin{cor} \label{th:index-gen-rec}  Let $\{V_i\}_{i\in I}$ and $\{W_i\}_{i\in I}$
be two families of subspaces of $V$ such that $V_i \subseteq W_i$
for all $i\in I$ and $\sigma (W_i) = W_i$ for every $\sigma \in
\gl (V, \{V_i\}_{i\in I})$. Then:
$$i(\sigma, {A_I},{W_I}) = \sum_{i\in I} i(\sigma, {V_i},{W_i})\,
,$$\noindent where only a finite number of terms are different
from 0.

Accordingly, if $i(\sigma, {A_I},{W_I}) = 0$, then:
$$\sum_{i\in I} i(\sigma, {V_i},{W_i}) = 0\,
,$$\noindent where only a finite number of terms are different
from 1.
\end{cor}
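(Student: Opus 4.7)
The plan is to imitate the derivation of formula (\ref{eq:index-erada-3}), replacing the ambient space $V_I$ with $W_I$ throughout. First, I would verify that $\sigma \in \gl(W_I, A_I)$ via the diagonal action: this action preserves $W_I$ because $\sigma(W_i)=W_i$, and satisfies $\sigma(A_I)\sim A_I$ inside $W_I$ since $\sigma(V_i)\sim V_i$ in each $W_i$. Next, I would take
\[X \;=\; \{\{0\},\, W_I,\, \prod_{j\in J} V_j\}_{J\subseteq I}\, ,\]
where $\prod_{j\in J}V_j$ denotes the subspace of $W_I$ with $V_j$ in the $j$-th slot for $j\in J$ and $0$ in every other slot (well-defined since $V_j\subseteq W_j$), and consider the $X$-symbol $f^i_\sigma\colon X\to \ZZ$, $A\mapsto i(\sigma, A, W_I)$, with the additive group $\ZZ$ written multiplicatively.

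The second step is to check that $f^i_\sigma$ is an $X$-symbol and that the hypotheses of Theorem \ref{thm:GRLs} hold for $A_i = V_i$ (in slot $i$) and $B_J = \prod_{j\in I-J} V_j$. The $X$-symbol axioms follow from (\ref{eq:index}) (the sum/intersection relation for the index), Lemma \ref{lem:aldehuela-3} (commensurability invariance), and (\ref{eqprop:index-equal}) applied to the $\sigma$-invariant subspace $W_I$. Condition (a) is immediate from the definition of $B_J$, and (b) holds trivially because the $V_j$ (for $j\in J$) and $B_J$ live in mutually disjoint collections of slots, so all the pairwise intersections in question are $\{0\}$. For (c), the hypothesis $\sigma\in\gl(V,\{V_i\}_{i\in I})$ says that $\sigma(V_i)=V_i$ for all $i$ outside some finite subset $J_0\subset I$; taking $J\supseteq J_0$ one has $\sigma(B_J)=B_J$, and (\ref{eqprop:index-equal}) forces $f^i_\sigma(B_J)=0$.

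Theorem \ref{thm:GRLs} then yields, since $B_\emptyset = A_I$,
\[i(\sigma, A_I, W_I) \;=\; \sum_{i\in I} i(\sigma, V_i, W_I)\, .\]
The main technical point --- and the only step specific to this corollary beyond the earlier formula (\ref{eq:index-erada-3}) --- is the reduction $i(\sigma, V_i, W_I) = i(\sigma, V_i, W_i)$ on the right-hand side. For this I view $W_i$ inside $W_I$ as the subspace supported only in slot $i$: it is $\sigma$-invariant (since $\sigma(W_i)=W_i$) and contains the chosen copy of $V_i$, so property (\ref{eqprop:index-recrt-equal}) applies and gives the desired equality. The finiteness of nonzero terms is inherited from $\sigma(V_i)=V_i$ for almost all $i$, and the final assertion of the corollary is then an immediate consequence.
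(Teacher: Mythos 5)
Your proposal is correct and follows essentially the same route as the paper: replace $V_I$ by $W_I$ in the derivation of formula (\ref{eq:index-erada-3}), apply Theorem \ref{thm:GRLs} to the $X$-symbol $A\mapsto i(\sigma,A,W_I)$ with $B_J=\prod_{j\in I-J}V_j$, and reduce $i(\sigma,V_i,W_I)=i(\sigma,V_i,W_i)$ via property (\ref{eqprop:index-recrt-equal}). You actually spell out the verification of the symbol axioms and of hypotheses (a)--(c) more explicitly than the paper does, but the argument is the same.
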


\begin{rem} Note that the statement of Corollary \ref{th:index-gen-rec} holds when $A_{I} = \oplus_{i\in I} V_i$,
$W_{I} =  \oplus_{i\in I} W_j$, $i(\sigma, {A_I},{W_I}) = 0$, for $\sigma
\in \gl (V, \{V_i\}_{i\in I})$.
\end{rem}

\subsubsection{Index reciprocity for reciprocity-admissible families of subspaces.} \label{sss:recip-tame}

Let us consider again an arbitrary infinite-dimensional vector
$k$-space $V$.

\begin{defn} We shall use the term ``reciprocity-admissible family'' to
refer to a set of subspaces $\{V_i\}_{i\in I}$ of $V$ satisfying
the following properties:
\begin{itemize}
\item $\sum_{i\in I} V_i \sim V$ \item $V_{i}  \cap [\underset
{j\ne i} \sum V_j] \sim \{0\}$ for all ${i\in I}$.
\end{itemize}

\end{defn}

\begin{exam} \label{ex:natural} If $V = \underset {n\in {\mathbb N}} \oplus <e_n>$,
each map $\phi \colon {\mathbb N} \times {\mathbb N}
\longrightarrow {\mathbb N}$ such that $$\begin{aligned} \#
\{n\in {\mathbb N} &\text{ with } n\notin Im \phi\} < \infty \\
\# \{(\alpha,\beta) \in {\mathbb N} \times {\mathbb N}  \text{ s.
t. there exists } &(\gamma, \delta) \in {\mathbb N} \times
{\mathbb N} \text{ with } \phi (\alpha,\beta) = \phi (\gamma,
\delta) \} < \infty
\end{aligned}$$\noindent determines the following reciprocity-admissible family of
subspaces of $V$:
$$V_i = \underset {j\in {\mathbb N}} \oplus <e_{\phi (i,j)}>\, .$$

    In particular, every bijection $\varphi \colon {\mathbb N} \times {\mathbb N}
\overset \sim \longrightarrow {\mathbb N}$ induces a
reciprocity-admissible family of subspaces of $V$.

    Moreover, similar to \cite{Mi}, we denote with $\gl(k)$ the
direct limit of the sequence $$\gl(1,k)\subset \gl(2,k)\subset
\gl(3,k)\subset \dots\, ,$$\noindent where each linear group
$\gl(n,k)$ is injected into $\gl(n+1,k)$ by the correspondence
$$\tau \longmapsto \phi_n^{n+1} (\tau) = \begin{pmatrix} \tau & 0 \\ 0 & 1 \end{pmatrix}\, .$$

    It is clear that
$\gl(k)$ is a subgroup of the restricted linear group $\gl (V,
\{V_i\}_{i\in I})$ associated with every reciprocity-admisible
family $\{V_i\}_{i\in {\mathbb N}}$ defined above.
\end{exam}

Let us now consider the family of $k$-subspaces of $V$
$${\tilde X} = \{\{0\}, V, V_i, V_{i_1} + \dots + V_{i_r}, V_{k_1}\cap \dots \cap V_{k_s}, \sum_{j\in J} V_j\}_{\{i,i_n,k_m\} \in I; J\subseteq I}\, .$$\noindent If $\sigma \in \gl (V,
\{V_i\}_{i\in I})$, the ${\tilde X}$-symbol
$$\begin{aligned} {\tilde f}_{\sigma}^i\colon {\tilde X} &\longrightarrow {\mathbb Z} \\ A &\longmapsto i(\sigma, {A},{V})\, ,\end{aligned}$$
satisfies the hypothesis of Theorem \ref{thm:GRLs} with $B_J = \sum_{j\in I-J} V_j$ for the family $\{V_i\}_{i\in I}$.

Hence, since $\sum_{i\in I} V_i \sim V$, then $i(\sigma, {\sum_{i\in I} V_i},{V}) = 0$ -see Lemma \ref{lem:aldehuela-3}-, and we obtain the following result:

\begin{cor} \label{th:index-rec} If $\{V_i\}_{i\in I}$ is a reciprocity-admissible family of
subspaces of $V$, for all $\sigma \in \gl
(V, \{V_i\}_{i\in I})$ one has that:
$$\sum_{i\in I} i(\sigma,V_i,V) = 0\,
,$$\noindent where only a finite number of terms are different
from 0.
\end{cor}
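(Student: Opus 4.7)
The plan is to apply Theorem \ref{thm:GRLs} to the $\tilde X$-symbol $\tilde f_\sigma^i\colon \tilde X \to \mathbb{Z}$ introduced just above the statement (with target group $\mathbb{Z}$ written additively, so the multiplicative identity ``$1$'' of the theorem corresponds to $0$), taking $B_J = \sum_{j\in I-J} V_j \in \tilde X$ for each finite $J\subset I$. Once the hypotheses of the theorem are verified, its conclusion reads $i(\sigma, B_\emptyset, V) = \sum_{i\in I} i(\sigma, V_i, V)$; and since $B_\emptyset = \sum_{i\in I} V_i \sim V$ by reciprocity-admissibility, Lemma \ref{lem:aldehuela-3} forces $i(\sigma, B_\emptyset, V) = i(\sigma, V, V) = 0$, which is the desired claim.

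First I will confirm that $\tilde f_\sigma^i$ is indeed an $\tilde X$-symbol: vanishing on $\{0\}$ and on $V$ is Lemma \ref{lem:aldehuela-3}, commensurability invariance is the same lemma, and the inclusion--exclusion relation is exactly equation (\ref{eq:index}). The finiteness requirement of Theorem \ref{thm:GRLs} (namely $\tilde f_\sigma^i(V_i) = 0$ for almost all $i$) follows from the very definition of $\gl (V, \{V_i\}_{i\in I})$: since $\sigma(V_i) = V_i$ for all but finitely many $i$, equation (\ref{eqprop:index-equal}) gives $i(\sigma, V_i, V) = 0$ for such $i$.

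The bulk of the work is to check conditions (a), (b), (c) of Theorem \ref{thm:GRLs} for this choice of $B_J$. Condition (a) is immediate from the set-theoretic identity $B_{J'} = \sum_{i\notin J'} V_i = B_J + \sum_{i\in J\setminus J'} V_i$. Condition (c) holds once $J$ is large enough to contain the finite set $\{i : \sigma(V_i) \neq V_i\}$, because then $\sigma(B_J) = B_J$ and equation (\ref{eqprop:index-equal}) yields $i(\sigma, B_J, V) = 0$. The main obstacle is condition (b): the $|J|+1$ subspaces $\{V_j\}_{j\in J}\cup\{B_J\}$ must be independent for commensurability. For each $V_j$ with $j\in J$, the required relation $V_j \cap \bigl[\sum_{j'\in J\setminus\{j\}} V_{j'} + B_J\bigr] = V_j \cap \sum_{i\neq j} V_i \sim \{0\}$ follows from the second axiom of reciprocity-admissibility; but the remaining relation $B_J \cap \sum_{j\in J} V_j \sim \{0\}$ is not axiomatic. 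I would establish it by induction on $|J|$: picking $j_0 \in J$ and setting $J' = J\setminus\{j_0\}$, any $v$ in the intersection decomposes as $v = v_{j_0} + w$ with $v_{j_0}\in V_{j_0}\cap \sum_{i\neq j_0} V_i$ (finite-dimensional by reciprocity-admissibility, since $v,w\in \sum_{i\neq j_0} V_i$) and $w\in B_{J'}\cap \sum_{j\in J'} V_j$ (finite-dimensional by the inductive hypothesis, since $w = v - v_{j_0} \in B_J + V_{j_0} = B_{J'}$). With (a), (b), (c) in place, Theorem \ref{thm:GRLs} applies and the conclusion follows as sketched in the first paragraph.
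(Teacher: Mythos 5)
Your proposal is correct and follows essentially the same route as the paper: apply Theorem \ref{thm:GRLs} to the $\tilde X$-symbol $\tilde f_\sigma^i$ with $B_J = \sum_{j\in I-J} V_j$ and conclude via $\sum_{i\in I} V_i \sim V$ and Lemma \ref{lem:aldehuela-3}. The only difference is that you spell out the verification of hypotheses (a)--(c) -- in particular the inductive check that $B_J \cap \sum_{j\in J} V_j \sim \{0\}$ -- which the paper asserts without proof; your argument there is sound.
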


\subsubsection{Sum of valuations on a complete curve}\label{sss:sum-valuations}

Similar to Remark \ref{rem:curve}, let $C$ be a
non-singular and irreducible curve over a perfect field $k$, and
let $\Sigma_C$ be its function field. If $x\in C$ is a closed
point, we set $A_x = {\widehat {\O_x}}$ (the completion of the
local ring $\O_x$), and $K_x = (\widehat {\O_x})_0$ (the field of
fractions of ${\widehat {\O_x}}$ that coincides with the
completion of ${\Sigma}_C$ with respect to the valuation ring
$\O_x$).

    If we set $$V = \underset
{x\in C} {{\prod}'} K_x = \{ f = (f_x) \text { such that } f_x \in
K_x \text { and } f_x\in A_x \text { for almost all } x\}\,
,$$\noindent then $\Sigma_C$ is regarded as a subspace of $V$ by
means of the diagonal embedding.

 It is clear that $\Sigma_C^{\times}$ is a commutative subgroup of $\gl(V,\{A_x\}_{x\in C})$.

    Let us now consider the family of $k$-subspaces of $V$
consisting of $V_1 = \prod_{x\in C} A_x$ and $V_2 =
\Sigma_C$. If $C$ is a complete curve, since $$V/(V_1 + V_2)
\simeq H^1(C, {\mathcal O}_C) \text { and } V_1\cap V_2 = H^0(C,
{\mathcal O}_C)\, ,$$\noindent then $\{V_1,V_2\}$ is a
reciprocity-admissible family of $V$.

    Thus, bearing in mind that $\Sigma_C^\times \subset \gl
(V,\{V_1,V_2\})$ and that $V_2$ is invariant under the action of
$\Sigma_C^\times$, it follows from expression (\ref{eqprop:index-equal}) and Corollary \ref{th:index-rec} that
$$i(f,{V_1},V) = 0\, ,$$\noindent for all $f \in
\Sigma_C^\times$.

    Moreover, since $V = V_1 + \underset {x\in C} \oplus
K_x$, $\underset {x\in C} \oplus K_x$ is invariant under the
action of $\Sigma_C^\times$, and $V_1 \cap \underset {x\in C}
\oplus K_x = \underset {x\in C} \oplus A_x$, then it follows
from Lemma \ref{lem:index-izq-mon} that
$$i(f,{V_1},V) = i(f,{\underset {x\in C}
\oplus A_x},{\underset {x\in C} \oplus K_x}) = 0\, ,$$\noindent for all $f \in
\Sigma_C^\times$.

    Hence, setting:
\begin{itemize}
\item $V_x = \dots \oplus \{0\} \oplus A_x \oplus \{0\} \oplus
\dots$, \item $W_x = \dots \oplus \{0\} \oplus K_x \oplus \{0\}
\oplus \dots$,
\end{itemize}
the families $\{V_x\}_{x\in C}$ and $\{W_x\}_{x\in C}$ of
$k$-subspaces of $V$ satisfy the hypothesis of Corollary
\ref{th:index-gen-rec}, and since
$$i(f,A_x,K_x) = \deg (x)\cdot v_x(f)$$\noindent
for all $f \in \Sigma_C^\times$, where $k(x)$ is the residue
class field of the closed point $x$ and $\deg (x) = \text{dim}_k
(k(x))$. Thus, we have that
$$\sum_{x\in C} \deg (x)\cdot v_x(f) = 0\, ,$$\noindent which is the well-known formula for the sum of valuations on a complete curve.

\subsubsection{A reciprocity law for the symbol $\nu_{x,C}$ on a surface.} \label{ss:sum-nu-surface}

Let $C$ be an irreducible and non-singular algebraic curve on a smooth, proper,
geometrically irreducible surface $S$ over a perfect
field $k$. If ${\Sigma}_S$ is the function field of $S$, the curve
$C$ defines a discrete valuation
$$v_C\colon {\Sigma}_S^{\times} \to {\mathbb Z}\, ,$$\noindent whose
residue class field is ${\Sigma}_C$ (the function field of $C$).

    Thus, it follows from the characterization of the tame symbol offered in \cite{Pa} that there
exists a central extension of groups \begin{equation} \label{eq:surfac-curve} 1 \to {\Sigma}_C^{\times} \to
{\tilde {{\Sigma}_S^{\times}}} \to {\Sigma}_S^{\times}\to 1\, ,\end{equation}\noindent such
that the value of its commutator is: $$\{f, g\}_C^S =
(\frac{f^{v_C(g)}}{g^{v_C(f)}})_{\vert_C}\in {\Sigma}_C^{\times}\,
,$$\noindent for each $f,g \in {\Sigma}_S^*$.

Moreover, if we fix a function $z\in
{\Sigma}_S^{\times}$ such that $v_C(z) = 1$, we have a morphism of groups
$$\begin{aligned}\phi_z \colon {\Sigma}_S^{\times} &\longrightarrow {\Sigma}_C^{\times} \\ f &\longmapsto \{f, z\}_C^S \, .\end{aligned}$$

Note that $\phi_z (f) = [f\cdot z^{-v_c(f)}]_{\vert_C}$.

If $x\in C$ and ${\overline {v_x^z}} (f) = v_x[\phi_z(f)]$, we have that ${\overline {v_x^z}}$ depends on the choice of the parameter $z$, because
  if $v_C(z') = 1$ then $${\overline {v_x^{z'}}} (f) = {\overline {v_x^{z}}} (f) + \lambda_{z',z}\cdot v_C(f)\, ,$$\noindent with $\lambda_{z',z} = v_x[(\frac{z'}z)_{\vert_C}] \in {\mathbb Z}$.

  Moreover, it is clear that \begin{equation} \label{eq:osipov-1213} i(\phi_z(f),A_x,K_x) = \deg(x)\cdot {\overline {v_x^z}} (f)\, ,\end{equation}\noindent where $k(x)$ is the residue class field of $x$ and $\deg(x) = \text{dim}_k k(x)$.

    Let consider the map $\nu_{x,C}\colon \Sigma_S^\times\times \Sigma_S^\times \longrightarrow {\mathbb Z}$ defined by the formula $$\nu_{x,C} (f,g) =
\begin{vmatrix} {\overline {v_x^z}} (f) & {\overline {v_x^z}} (g) \\ v_C(f) & v_C(g)\end{vmatrix}\, .$$

The symbol $\nu_{x,C}$ is independent of the choice of the parameter $z$. This symbol has been used to describe the intersection index of divisors on algebraic surfaces and it also appears in the descripcion of non-ramified extensions of two-dimensional local fields when the field $k$ is finite.

Bearing in mind expression (\ref{eq:osipov-1213}), with similar arguments to above, it follows from Corollary \ref{th:index-gen-rec} and Corollary \ref{th:index-rec} that $$\sum_{x\in C} \deg(x)\cdot \nu_{x,C} (f,g) = 0 \qquad \text{ for all } f,g\in \Sigma_S^\times\, ,$$\noindent which generalizes a reciprocity law offered by D. V. Osipov in \cite{Os}.

\subsection{Explicit reciprocity laws for the tame symbol on vector spaces.} \label{ss:tame-symbol-explicit}
Let us again consider an arbitrary infinite-dimensional vector $k$-space $V$, and let $\{V_i\}_{i\in I}$ be a family of $k$-subspaces of
$V$.

Keeping the previous notation, the restricted linear group associated
with $\{V_i\}_{i\in I}$, $\gl (V, \{V_i\}_{i\in I})$, is the subgroup of
${\aut}_k (V)$ defined by:
$$\gl (V, \{V_i\}_{i\in I}) = \{\sigma \in {\aut}_k (V) \text{
such that } \sigma \in \gl (V,V_i) \text { for all i }$$\noindent
$$\text{ and } \sigma (V_i) = V_i \text { for almost all i}\}\,
.$$

    If $V_I = \prod_{i\in I} V$, the diagonal embedding induces a natural immersion of groups $\gl (V, \{V_i\}_{i\in I}) \hookrightarrow \aut_k (V_I)$.

Moreover, similar to above, it follows from expression (\ref{eqprop:recrt-equal}) that
$$<\sigma, \tau>_{V_i}^{V_I} = <\sigma, \tau>_{V_i}^{V}\, ,$$\noindent for all commuting $\sigma, \tau \in \gl (V,
\{V_i\}_{i\in I})$.

     Let us again consider the set of $k$-subspaces of $V_I$, $$X = \{\{0\}, V_I, \prod_{j\in J} V_j\}_{J\subseteq I}\, ,$$\noindent and the X-symbol
$$\begin{aligned} f_{\sigma,\tau}\colon X &\longrightarrow k^{\times} \\ A &\longmapsto <\sigma, \tau>_{A}^{V_I}\, ,\end{aligned}$$\noindent where $\sigma, \tau$ are commuting elements of $\gl (V,
\{V_i\}_{i\in I})$.

    If $B_J = \prod_{j\in I-J} V_j$, it is clear that the X-symbol $f_{\sigma,\tau}$ satisfies the hypothesis of Theorem \ref{thm:GRLs} for the family $\{V_i\}_{i\in I}$, and hence
\begin{equation} \label{eq:erada-3} <\sigma, \tau>_{\prod_{i\in I} V_i}^{V_I} = \prod_{i\in I} <\sigma, \tau>_{V_i}^{V_I} = \prod_{i\in I} <\sigma, \tau>_{V_i}^{V}\, .\end{equation}

    Let us again fix two families of subspaces of $V$, $\{V_i\}_{i\in I}$ and $\{W_i\}_{i\in
I}$, such that $V_i \subseteq W_i$ for all $i\in I$ and $\sigma
(W_i) = W_i$ for every $\sigma \in  \gl (V, \{V_i\}_{i\in I})$.

    Let us again write $A_{I} = \prod_{i\in I} V_i$,
$W_{I} =  \prod_{i\in I} W_j$. If $\sigma, \tau \in
\gl (V, \{V_i\}_{i\in I})$, it is clear that $\sigma, \tau \in \gl
(W_I, A_I)$ by means of the diagonal embedding.

    Again replacing $V_I$ with $W_I$ in formula (\ref{eq:erada-3}), from Theorem \ref{thm:GRLs} we also obtain the following result:

\begin{cor} \label{th:gen-rec}  Let $\{V_i\}_{i\in I}$ and $\{W_i\}_{i\in I}$
be two families of subspaces of $V$ such that $V_i \subseteq W_i$
for all $i\in I$ and $\sigma (W_i) = W_i$ for every $\sigma \in
\gl (V, \{V_i\}_{i\in I})$. For commuting elements $\sigma, \tau
\in \gl (V, \{V_i\}_{i\in I})$, we have:
$$<\sigma,
\tau>^{W_I}_{A_I} = \prod_{i\in I} <\sigma, \tau>^{W_i}_{V_i}\,
,$$\noindent where only a finite number of terms are different
from 1.

Accordingly, if $\{\sigma,
\tau\}^{W_I}_{A_I} = 1$ and $i(\sigma, A_I, W_I) = 0$, then:
$$\prod_{i\in I} <\sigma, \tau>^{W_i}_{V_i} = 1\,
,$$\noindent where only a finite number of terms are different
from 1.
\end{cor}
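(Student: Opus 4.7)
The plan is to apply the General Reciprocity Law (Theorem \ref{thm:GRLs}) to the ambient space $W_I$, rather than $V_I$. Concretely, I would take
$$X = \{\{0\},\, W_I,\, \prod_{j\in J} V_j\}_{J\subseteq I},$$
where each $\prod_{j\in J} V_j$ is understood as the coordinate subspace of $W_I$ having $V_j$ in positions $j\in J$ and $0$ elsewhere, and set $B_J = \prod_{j\in I-J} V_j$ and $A_i = V_i$ (embedded at position $i$). The candidate $X$-symbol is
$$f_{\sigma,\tau}\colon X \longrightarrow k^\times,\qquad A \longmapsto <\sigma,\tau>^{W_I}_{A}.$$
Here $\sigma,\tau$ act on $W_I$ via the diagonal embedding; this is legitimate because $\sigma(W_i) = W_i = \tau(W_i)$ coordinate-wise, so $W_I$ is $\sigma$- and $\tau$-stable.

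First I would verify that $f_{\sigma,\tau}$ is an $X$-symbol: the vanishing $f_{\sigma,\tau}(\{0\}) = 1 = f_{\sigma,\tau}(W_I)$ is property (\ref{eprop:rqerq}); independence of the commensurability class is property (8) of Subsection \ref{sss:reaezd} carried over to $<\cdot,\cdot>$ via the index invariance (\ref{eq:index-invariant}); and the multiplicativity condition (\ref{eq:tanri}) is exactly equation (\ref{eq:moni-izqe}).

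Next I would check the hypotheses of Theorem \ref{thm:GRLs}. Condition (a), $B_{J'} = B_J + \sum_{i\in J-J'} A_i$, is the obvious partition of coordinates $I - J' = (I-J) \cup (J-J')$. Condition (b), independence for commensurability of the $|J|+1$ subspaces $\{V_j\}_{j\in J}$ and $B_J$, is immediate because the coordinate subspaces meet only at $0$ inside $W_I$. For (c), since $\sigma \in \gl(V,\{V_i\}_{i\in I})$, there is a finite $J_0 \subset I$ with $\sigma(V_i) = V_i = \tau(V_i)$ for $i \notin J_0$; then for every $J \supseteq J_0$ one has $\sigma(B_J) = B_J = \tau(B_J)$, whence $i(\sigma,B_J,W_I) = 0$ by (\ref{eqprop:index-equal}) and $\{\sigma,\tau\}^{W_I}_{B_J} = 1$ by (\ref{eqprop:equal}), so $f_{\sigma,\tau}(B_J) = 1$; the same argument shows $f_{\sigma,\tau}(V_i) = 1$ for $i \notin J_0$, giving the needed finiteness.

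Theorem \ref{thm:GRLs} then yields
$$<\sigma,\tau>^{W_I}_{A_I} \;=\; f_{\sigma,\tau}(B_\emptyset) \;=\; \prod_{i\in I} f_{\sigma,\tau}(V_i) \;=\; \prod_{i\in I} <\sigma,\tau>^{W_I}_{V_i}.$$
To land in the form stated in the corollary, I would apply the change-of-ambient identity (\ref{eqprop:recrt-equal}) with $V_i \subseteq W_i \subseteq W_I$ and $\sigma(W_i) = W_i = \tau(W_i)$ to each factor, replacing $W_I$ by $W_i$ on the right-hand side. The second assertion is then automatic: if $\{\sigma,\tau\}^{W_I}_{A_I} = 1$ and $i(\sigma, A_I, W_I) = 0$, then the definition of $<\cdot,\cdot>$ gives $<\sigma,\tau>^{W_I}_{A_I} = 1$, and the product equals $1$. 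The main thing to keep straight, I expect, is the bookkeeping between the coordinate-wise viewpoint used to verify (a), (b) and the diagonal action of $\sigma,\tau$ used to verify (c), together with the repeated invocations of (\ref{eqprop:recrt-equal}) whose hypotheses must each be certified.
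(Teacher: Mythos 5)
Your proposal is correct and follows essentially the same route as the paper: the paper obtains the corollary precisely by replacing the ambient space $V_I$ with $W_I$ in the argument establishing formula (\ref{eq:erada-3}), i.e.\ applying Theorem \ref{thm:GRLs} to the $X$-symbol $A \mapsto\, <\sigma,\tau>^{W_I}_{A}$ with $B_J = \prod_{j\in I-J}V_j$ and then invoking (\ref{eqprop:recrt-equal}) to pass from $W_I$ to each $W_i$. You merely spell out the verifications of the $X$-symbol axioms and of hypotheses (a)--(c) that the paper leaves implicit.
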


\begin{rem} Note that the statement of Corollary \ref{th:gen-rec} holds when $A_{I} = \oplus_{i\in I} V_i$,
$W_{I} =  \oplus_{i\in I} W_j$, $\{\sigma,
\tau\}^{W_I}_{A_I} = 1$ and $i(\sigma, A_I, W_I) = 0$ for commuting elements $\sigma, \tau
\in \gl (V, \{V_i\}_{i\in I})$.
\end{rem}

    Let us now consider a commutative group $G$ and a morphism of
groups \linebreak $\varphi\colon k^\times \to G$. Hence, the
central extension (\ref{e:greg}) induces  an exact sequence of
groups
$$1\to G \to {\widetilde {(\gl {V,V_+}})_{\varphi}} \to \gl (V,V_+) \to 1$$
and, given  commuting elements $\sigma, \tau \in \gl (V,V_+)$, we
shall denote  the corresponding commutator by $\{\sigma,\tau\}_{V_+,\varphi}^{V}$.

    For each commuting $\sigma, \tau \in \gl (V,
\{V_i\}_{i\in I})$, if $X$ is the above family of $k$-subspaces of $V_I$, from the X-symbol
$$\begin{aligned} f_{\sigma,\tau}^{\varphi}\colon X &\longrightarrow G \\ A &\longmapsto <\sigma, \tau>_{A,\varphi}^{V_I}\, ,\end{aligned}$$\noindent with the same arguments of Corollary \ref{th:gen-rec} one has that:

\begin{cor} \label{cor:gen-rec-vector} Let $\{V_i\}_{i\in I}$ and $\{W_i\}_{i\in I}$
be two families of subspaces of $V$ such that $V_i \subseteq W_i$
for all $i\in I$ and $\sigma (W_i) = W_i$ for every $\sigma \in
\gl (V, \{V_i\}_{i\in I})$. For commuting elements $\sigma, \tau
\in \gl (V, \{V_i\}_{i\in I})$, if $G$ is a commutative group and $\varphi\colon k^*
\to G$ is a morphism of groups, then:
$$ \varphi (<\sigma, \tau>^{W_I}_{A_I}) = \prod_{i\in I} [\varphi (-1)]^{i(\sigma,V_i,W_i)i(\tau,V_i,W_i)}\cdot \{\sigma, \tau\}^{W_i}_{V_i,\varphi}\,
,$$\noindent where only a finite number of terms are different
from 1.

Accordingly, if $\{\sigma,
\tau\}^{W_I}_{A_I} = 1$ and $i(\sigma, A_I, W_I) = 0$, then:
$$\prod_{i\in I} [\varphi (-1)]^{i(\sigma,V_i,W_i)i(\tau,V_i,W_i)}\cdot \{\sigma, \tau\}^{W_i}_{V_i,\varphi} = 1\,
,$$\noindent where only a finite number of terms are different
from 1.
\end{cor}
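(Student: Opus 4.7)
The plan is to mimic the proof of Corollary \ref{th:gen-rec} with the $\varphi$-twisted symbol indicated just before the statement. First I would verify that $f^\varphi_{\sigma,\tau}\colon A\mapsto <\sigma,\tau>_{A,\varphi}^{V_I}$ really is an $X$-symbol on $X = \{\{0\}, V_I, \prod_{j\in J}V_j\}_{J\subseteq I}$. The multiplicativity condition (\ref{eq:tanri}) for $f^\varphi_{\sigma,\tau}$ is obtained by applying $\varphi$ to (\ref{eq:moni-izqe}), using that $G$ is commutative and that $\{\cdot,\cdot\}_{\cdot,\varphi}^{V_I}$ is by construction the pushout of $\{\cdot,\cdot\}_{\cdot}^{V_I}$ along $\varphi$. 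Invariance under commensurability and vanishing on $\{0\}$ and $V_I$ are inherited from the corresponding properties of the untwisted tame symbol (cf. (\ref{eprop:rqerq}) and property (8) of Subsection \ref{sss:reaezd}).

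Next I would check hypotheses (a), (b), (c) of Theorem \ref{thm:GRLs} for the family $\{V_i\}_{i\in I}$ (embedded diagonally in $V_I$) with $B_J = \prod_{j\in I-J}V_j$. Condition (a) is tautological from the definition of $B_J$. Condition (b) holds because the $V_j$ for $j\in J$ and $B_J$ sit in disjoint factors of the product $V_I$, so they are trivially independent for commensurability. For condition (c), the definition of $\gl(V,\{V_i\}_{i\in I})$ guarantees that for $J$ sufficiently large $\sigma(V_i) = V_i = \tau(V_i)$ for every $i\notin J$, so $\sigma$ and $\tau$ preserve $B_J$; combining the $\varphi$-pushed version of (\ref{eqprop:equal}) with (\ref{eqprop:index-equal}) then forces $f^\varphi_{\sigma,\tau}(B_J)=1$.

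Theorem \ref{thm:GRLs} now yields $<\sigma,\tau>_{A_I,\varphi}^{V_I} = \prod_{i\in I} <\sigma,\tau>_{V_i,\varphi}^{V_I}$. To pass from $V_I$ to $W_I$ in each factor, invoke the hypothesis $\sigma(W_i)=\tau(W_i)=W_i$ together with the $\varphi$-pushed version of (\ref{eqprop:recrt-equal}), obtaining
\[
<\sigma,\tau>_{A_I,\varphi}^{W_I} \;=\; \prod_{i\in I} <\sigma,\tau>_{V_i,\varphi}^{W_i}.
\]
Unpacking each side via Definition \ref{def:tame-vector} pushed along $\varphi$, namely
\[
<\sigma,\tau>_{V_i,\varphi}^{W_i} \;=\; \varphi(-1)^{i(\sigma,V_i,W_i)\cdot i(\tau,V_i,W_i)}\cdot \{\sigma,\tau\}_{V_i,\varphi}^{W_i},
\]
and $\varphi(<\sigma,\tau>_{A_I}^{W_I}) = <\sigma,\tau>_{A_I,\varphi}^{W_I}$, produces the displayed formula of the corollary.

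For the ``accordingly'' statement, if $\{\sigma,\tau\}_{A_I}^{W_I}=1$ and $i(\sigma,A_I,W_I)=0$, then $<\sigma,\tau>_{A_I}^{W_I}=1$, so $\varphi$ sends it to the identity of $G$ and the product on the right equals $1$. No genuine obstacle is expected: the only delicate point is the bookkeeping needed to keep track of the sign correction $(-1)^{i\cdot i}$ from Definition \ref{def:tame-vector} as it is transported through $\varphi$; once Corollary \ref{th:gen-rec} is in hand, the remaining work is purely formal.
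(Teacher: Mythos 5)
Your proposal is correct and follows essentially the same route as the paper, which simply defines the pushed-forward $X$-symbol $f^{\varphi}_{\sigma,\tau}(A)=<\sigma,\tau>_{A,\varphi}^{V_I}$ and invokes ``the same arguments'' as in Corollary \ref{th:gen-rec}, i.e.\ Theorem \ref{thm:GRLs} with $B_J=\prod_{j\in I-J}V_j$ followed by the replacement of $V_I$ by $W_I$. Your write-up merely makes explicit the verifications (that the twisted symbol is an $X$-symbol via $\varphi$ applied to (\ref{eq:moni-izqe}), and conditions (a)--(c)) that the paper leaves implicit.
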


\begin{rem} Similar to the above, it is clear that the statement of Corollary \ref{cor:gen-rec-vector} holds when $A_{I} = \oplus_{i\in I} V_i$,
$W_{I} =  \oplus_{i\in I} W_j$, $\{\sigma,
\tau\}^{W_I}_{A_I} = 1$ and $i(\sigma, A_I, W_I) = 0$ for commuting elements $\sigma, \tau
\in \gl (V, \{V_i\}_{i\in I})$.
\end{rem}

\subsubsection{Tame Reciprocity for reciprocity-admissible families of subspaces.} \label{sss:recip-tame}

Let us consider again an arbitrary vector
$k$-space $V$ and a reciprocity-admissible family $\{V_i\}_{i\in I}$ of subspaces of
$V$. Recall that this set satisfies the following properties:
\begin{itemize}
\item $\sum_{i\in I} V_i \sim V$ \item $V_{i}  \cap [\underset
{j\ne i} \sum V_j] \sim \{0\}$ for all ${i\in I}$.
\end{itemize}

 If $\{V_i\}_{i\in I}$ is a reciprocity-admissible family of
subspaces of $V$, and we now consider the family of $k$-subspaces of $V$
$${\tilde X} = \{\{0\}, V, V_i, V_{i_1} + \dots + V_{i_r}, V_{k_1}\cap \dots \cap V_{k_s}, \sum_{j\in J} V_j\}_{\{i,i_n,k_m\} \in I; J\subseteq I}\, ,$$\noindent where $\sigma, \tau \in \gl (V,
\{V_i\}_{i\in I})$ are commuting elements, the ${\tilde X}$-symbol
$$\begin{aligned} {\tilde f}_{\sigma,\tau}\colon {\tilde X} &\longrightarrow k^{\times} \\ V_i &\longmapsto <\sigma, \tau>_{V_i}^{V}\, ,\end{aligned}$$
satisfies the hypothesis of Theorem \ref{thm:GRLs} with $B_J = \sum_{j\in I-J} V_j$ for the family $\{V_i\}_{i\in I}$.

Hence, since $\sum_{i\in I} V_i \sim V$, then $<\sigma, \tau>_{\sum_{i\in I} V_i}^{V} = 1$ -see expression (\ref{eprop:rqerq})-, and we get the following result:

\begin{cor} \label{th:rec} If $\{V_i\}_{i\in I}$ is a reciprocity-admissible family of
subspaces of $V$, for all commuting elements $\sigma, \tau \in \gl
(V, \{V_i\}_{i\in I})$ one has that:
$$\prod_{i\in I} (-1)^{i(\sigma,V_i,V)i(\tau,V_i,V)}\cdot \{\sigma, \tau\}^{V}_{V_i} = 1\,
,$$\noindent where only a finite number of terms are different
from 1.
\end{cor}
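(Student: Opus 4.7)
The strategy is to apply Theorem \ref{thm:GRLs} directly to the $\tilde X$-symbol $\tilde f_{\sigma,\tau}\colon A\mapsto \langle \sigma,\tau\rangle^V_A$ with the choice $B_J = \sum_{j\in I-J}V_j$ already flagged by the author. Once the hypotheses of that theorem are verified, the conclusion $\tilde f_{\sigma,\tau}(B_\emptyset) = \prod_{i\in I}\tilde f_{\sigma,\tau}(V_i)$, combined with the identification $B_\emptyset = \sum_{i\in I} V_i \sim V$, will force the product to equal $1$, and unpacking Definition \ref{def:tame-vector} then delivers the stated formula.

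First I would verify that $\tilde f_{\sigma,\tau}$ is a genuine $\tilde X$-symbol: equation (\ref{eq:moni-izqe}) supplies the multiplicative identity (\ref{eq:tanri}), property (8) of Subsection \ref{sss:reaezd} supplies invariance under commensurability, and (\ref{eprop:rqerq}) supplies vanishing at $\{0\}$ and at $V$. Next I would verify the three hypotheses of Theorem \ref{thm:GRLs}. Condition (a) is the set-theoretic identity $I - J' = (I-J) \sqcup (J-J')$ applied inside the sum. Condition (b), namely independence for commensurability of $\{V_j\}_{j\in J} \cup \{B_J\}$, reduces for each fixed $j_0 \in J$ to $V_{j_0} \cap [B_J + \sum_{j\in J,\, j\ne j_0} V_j] = V_{j_0} \cap \sum_{i\ne j_0} V_i \sim \{0\}$, which is exactly the second admissibility axiom, together with the symmetric piece $B_J \cap \sum_{j\in J} V_j \sim \{0\}$ which requires a separate finite-dimensional bookkeeping argument (see below). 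Condition (c) holds as soon as $J$ contains every index at which $\sigma(V_i)\neq V_i$ or $\tau(V_i)\neq V_i$: for such $J$ both $\sigma$ and $\tau$ preserve $B_J$, so property (5) and (\ref{eqprop:index-equal}) give $\{\sigma,\tau\}^V_{B_J} = 1$ and $i(\sigma,B_J,V) = 0 = i(\tau,B_J,V)$, i.e.\ $\tilde f_{\sigma,\tau}(B_J) = 1$. The same reasoning shows $\tilde f_{\sigma,\tau}(V_i) = 1$ for $i\notin J$, explaining why only finitely many factors in the product are nontrivial.

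With the hypotheses in place, Theorem \ref{thm:GRLs} yields $\tilde f_{\sigma,\tau}(B_\emptyset) = \prod_{i\in I} \tilde f_{\sigma,\tau}(V_i)$. Since $B_\emptyset = \sum_i V_i \sim V$, Lemma \ref{lem:aldehuela-3} kills the index factor on the left, and property (8) combined with (\ref{eprop:rqerq}) reduces the commutator to $\{\sigma,\tau\}^V_V = 1$; hence the left side is $1$. Substituting Definition \ref{def:tame-vector} into each $\tilde f_{\sigma,\tau}(V_i) = \langle\sigma,\tau\rangle^V_{V_i}$ on the right then delivers the claim. The one genuinely non-formal step in the entire argument, and the main obstacle I expect, is the commensurability triviality $B_J \cap \sum_{j\in J} V_j \sim \{0\}$ inside condition (b): this does not follow immediately from the two reciprocity-admissibility axioms taken one at a time, and I would establish it by induction on $|J|$, exploiting the fact that each ``defect'' $V_i \cap \sum_{j\ne i} V_j$ is finite dimensional together with the global relation $\sum_{i\in I} V_i \sim V$.
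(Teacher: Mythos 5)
Your proof follows the paper's argument exactly: the paper likewise applies Theorem \ref{thm:GRLs} to the $\tilde{X}$-symbol $A\mapsto\langle\sigma,\tau\rangle^{V}_{A}$ with $B_J=\sum_{j\in I-J}V_j$ and concludes from $\sum_{i\in I}V_i\sim V$ via (\ref{eprop:rqerq}). The only difference is that you spell out the verification of hypotheses (a)--(c) --- in particular the step $B_J\cap\sum_{j\in J}V_j\sim\{0\}$, which the paper asserts implicitly and which in fact follows directly from the second admissibility axiom (no induction needed): any $v$ in that intersection can be written as $\sum_{j\in J}v_j$ with each $v_j\in V_j\cap\sum_{i\ne j}V_i$ finite-dimensional, so the intersection lies in a finite sum of finite-dimensional subspaces.
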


    Furthermore, similar to Corollary \ref{cor:gen-rec-vector}, we also have that:

\begin{cor} \label{cor:rec-vector} If $\{V_i\}_{i\in I}$ is a reciprocity-admissible family of
subspaces of $V$, $\sigma, \tau \in \gl (V, \{V_i\}_{i\in I})$
such that $\sigma \circ \tau = \tau \circ \sigma$, $G$ is a
commutative group, and $\varphi\colon k^\times \to G$ is a
morphism of groups, then:
$$\prod_{i\in I} [\varphi (-1)]^{i(\sigma,V_i,V)i(\tau,V_i,V)}\cdot \{\sigma, \tau\}^{V}_{V_i,\varphi} = 1\,
,$$\noindent where only a finite number of terms are different
from 1.
\end{cor}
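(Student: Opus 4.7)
The plan is to apply Theorem \ref{thm:GRLs} to a $G$-valued symbol obtained by pushing the tame symbol forward along $\varphi$. Specifically, on the family $\tilde X$ of $k$-subspaces of $V$ already considered in the proof of Corollary \ref{th:rec}, I would define
\[
\tilde f^{\varphi}_{\sigma,\tau}\colon \tilde X \longrightarrow G,\qquad A\longmapsto [\varphi(-1)]^{i(\sigma,A,V)\cdot i(\tau,A,V)}\cdot \{\sigma,\tau\}^V_{A,\varphi}.
\]
This is the natural $G$-valued analogue of the $\tilde X$-symbol $\tilde f_{\sigma,\tau}$ used in the proof of Corollary \ref{th:rec}, and it is exactly the invariant the statement asks us to control.

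Next, I would verify that $\tilde f^{\varphi}_{\sigma,\tau}$ is indeed an $\tilde X$-symbol. Invariance under commensurability follows from property $(8)$ of Subsection \ref{sss:reaezd} (applied to the $k^\times$-valued commutator and then pushed forward via $\varphi$) together with Lemma \ref{lem:aldehuela-3}, which ensures that the index exponents are also preserved. The multiplicative identity
\[
\tilde f^{\varphi}_{\sigma,\tau}(A)\cdot \tilde f^{\varphi}_{\sigma,\tau}(B) = \tilde f^{\varphi}_{\sigma,\tau}(A+B)\cdot \tilde f^{\varphi}_{\sigma,\tau}(A\cap B)
\]
is obtained by applying $\varphi$ to relation (\ref{eq:moni}) and combining it with the index relation (\ref{eq:index}); since $G$ is abelian, the exponents of $\varphi(-1)$ can be rearranged so that the sign term $\beta$ in (\ref{eq:moni}) is absorbed exactly into the index correction, reproducing in $G$ the identity (\ref{eq:moni-izqe}).

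Once the symbol structure is established, the rest of the proof mirrors that of Corollary \ref{th:rec}. Taking $B_J = \sum_{j\in I-J} V_j$, hypotheses (a) and (b) of Theorem \ref{thm:GRLs} follow from the reciprocity-admissibility of $\{V_i\}_{i\in I}$, while hypothesis (c) holds because once $J$ is large enough $B_J \sim V$; then (\ref{eprop:rqerq}) combined with Lemma \ref{lem:aldehuela-3} forces $\tilde f^{\varphi}_{\sigma,\tau}(B_J) = \varphi(1) = 1$. Theorem \ref{thm:GRLs} then yields $\tilde f^{\varphi}_{\sigma,\tau}(B_\emptyset) = \prod_{i\in I} \tilde f^{\varphi}_{\sigma,\tau}(V_i)$, and since $B_\emptyset = \sum_{i\in I} V_i \sim V$ the left-hand side equals $1$, which is the desired identity. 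The main obstacle is the bookkeeping of the sign exponent $\beta$ when pushing (\ref{eq:moni}) forward to $G$: commutativity of $G$ is used crucially so that the four index products appearing in $\beta$ can be recombined via (\ref{eq:index}) into the clean index-correction factor appearing in $\tilde f^{\varphi}_{\sigma,\tau}$.
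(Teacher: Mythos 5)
Your overall strategy is exactly the one the paper (implicitly) uses: push the tame symbol forward along $\varphi$ to obtain the $G$-valued $\tilde X$-symbol $\tilde f^{\varphi}_{\sigma,\tau}(A)=[\varphi(-1)]^{i(\sigma,A,V)i(\tau,A,V)}\{\sigma,\tau\}^V_{A,\varphi}$ and feed it to Theorem \ref{thm:GRLs} with $B_J=\sum_{j\in I-J}V_j$; the paper simply says ``similar to Corollary \ref{cor:gen-rec-vector}'' and leaves all of this unwritten. Your verification of the symbol axioms is fine, though heavier than necessary: since $\{\sigma,\tau\}^V_{A,\varphi}=\varphi\bigl(\{\sigma,\tau\}^V_A\bigr)$, one has $\tilde f^{\varphi}_{\sigma,\tau}=\varphi\circ\tilde f_{\sigma,\tau}$, so every axiom follows by applying the homomorphism $\varphi$ to the corresponding $k^\times$-valued identity, in particular to (\ref{eq:moni-izqe}); there is no need to redo the $\beta$-bookkeeping in $G$.

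There is, however, one step that fails as written: your justification of hypothesis (c) via the claim that $B_J\sim V$ once $J$ is large enough. That is false in general. Take the Weil-reciprocity instance with $I=\{1,2\}$, $V_1=\prod_{x\in C}A_x$, $V_2=\Sigma_C$: then $B_{\{1\}}=\Sigma_C$ and $B_{\{2\}}=\prod_x A_x$, neither of which is commensurable with the adele space $V$, even though $B_\emptyset=V_1+V_2\sim V$. Removing finitely many $V_i$ from the sum changes the commensurability class whenever those $V_i$ are infinite-dimensional, so (\ref{eprop:rqerq}) plus Lemma \ref{lem:aldehuela-3} cannot be invoked here. The correct argument is different in kind: by the definition of $\gl(V,\{V_i\}_{i\in I})$ one has $\sigma(V_i)=V_i=\tau(V_i)$ for almost all $i$, so for $J$ containing the finitely many exceptional indices the subspace $B_J=\sum_{j\in I-J}V_j$ is itself invariant under $\sigma$ and $\tau$; then (\ref{eqprop:equal}) gives $\{\sigma,\tau\}^V_{B_J}=1$ and (\ref{eqprop:index-equal}) gives $i(\sigma,B_J,V)=i(\tau,B_J,V)=0$, whence $\tilde f^{\varphi}_{\sigma,\tau}(B_J)=1$. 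With that repair (and the observation that $f(B_\emptyset)=1$ \emph{does} legitimately use $\sum_{i\in I}V_i\sim V$ together with (\ref{eprop:rqerq}) and Lemma \ref{lem:aldehuela-3}), your proof is complete and coincides with the paper's.
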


\medskip

\subsubsection{Weil Reciprocity Law} \label{sss:curve} Let $C$ again be a
non-singular and irreducible curve over a perfect field $k$, and
let $\Sigma_C$ be its function field. If $x\in C$ is a closed
point, we set $A_x = {\widehat {\O_x}}$ (the completion of the
local ring $\O_x$), and $K_x = (\widehat {\O_x})_0$ (the field of
fractions of ${\widehat {\O_x}}$, which coincides with the
completion of ${\Sigma}_C$ with respect to the valuation ring
$\O_x$).

    If we set $$V = \underset
{x\in C} {{\prod}'} K_x = \{ f = (f_x) \text { such that } f_x \in
K_x \text { and } f_x\in A_x \text { for almost all } x\}\,
,$$\noindent then $\Sigma_C$ is regarded as a subspace of $V$ by
means of the diagonal embedding.

    It is clear that $\Sigma_C^{\times}$ is a commutative subgroup of $\gl(V,\{A_x\}_{x\in C})$.

    Let us consider the family of $k$-subspaces of $V$
consisting of $V_1 = \prod_{x\in C} A_x$ and $V_2 =
\Sigma_C$. If $C$ is a complete curve, since $$V/(V_1 + V_2)
\simeq H^1(C, {\mathcal O}_C) \text { and } V_1\cap V_2 = H^0(C,
{\mathcal O}_C)\, ,$$\noindent then $\{V_1,V_2\}$ is a
reciprocity-admissible family of $V$. Also, since $i(f,A_x,K_x) = \deg (x)\cdot v_x(f)$, we have that
$$i(h,V_1,V) = \sum_{x\in C} i(h,A_x,K_x) = \sum_{x\in C} \deg (x)\cdot v_x(h) = 0$$\noindent for all $h\in \Sigma_C^{\times} \subseteq \gl (V,V_1)$.

    Thus, bearing in mind that $\Sigma_C^\times \subset \gl
(V,\{V_1,V_2\})$ and that $V_2$ is invariant under the action of
$\Sigma_C^\times$, expression (\ref{eqprop:equal}) and Corollary \ref{th:rec} show that
$$\{f,g\}_{V_1}^V = 1\, ,$$\noindent for all $f,g \in
\Sigma_C^\times$.

    Moreover, since $V = V_1 + \underset {x\in C} \oplus
K_x$, $\underset {x\in C} \oplus K_x$ is invariant under the
action of $\Sigma_C^\times$, and $V_1 \cap \underset {x\in C}
\oplus K_x = \underset {x\in C} \oplus A_x$, then it follows
from Lemma \ref{lem:izq-mon} that
$$\{f,g\}_{V_1}^V = \{f,g\}_{\underset {x\in C}
\oplus A_x}^{\underset {x\in C} \oplus K_x} = 1\, ,$$\noindent for all $f,g \in
\Sigma_C^\times$.

    Furthermore, one has that $$i(h, \underset {x\in C}
\oplus A_x, \underset {x\in C}
\oplus K_x) =  \sum_{x\in C} \deg (x)\cdot v_x(h) = 0 \text{ for all } h\in \Sigma_C^{\times}\, .$$

    Hence, setting:
\begin{itemize}
\item $V_x = \dots \oplus \{0\} \oplus A_x \oplus \{0\} \oplus
\dots$, \item $W_x = \dots \oplus \{0\} \oplus K_x \oplus \{0\}
\oplus \dots$,
\end{itemize}
the families $\{V_x\}_{x\in C}$ and $\{W_x\}_{x\in C}$ of
$k$-subspaces of $V$ satisfy the hypothesis of Corollary
\ref{th:gen-rec}, and since
$$\{{f}, {g}\}_{A_x}^{K_x} =
N_{k(x)/k}[\frac{{f}^{v_x({g})}}{{g}^{v_x({f})}}(x)]$$\noindent
for all $f,g \in \Sigma_C^\times$, where $k(x)$ is the residue
class field of the closed point $x$, $\deg (x) = \text{dim}_k
(k(x))$ and $N_{k(x)/k}$ is the norm of the extension
$k\hookrightarrow k(x)$, then we have that
$$\prod_{x\in C} (-1)^{\deg (x)v_x(f) v_x(g)}\cdot N_{k(x)/k}[\frac{{f}^{v_x({g})}}{{g}^{v_x({f})}}(x)] = 1 \in
k^\times\, ,$$\noindent which is the explicit expression of the
Weil Reciprocity Law.

    Furthermore, with the same method one can likewise see that the reciprocity laws for
the generalizations of the tame symbol to $\gl(n,\Sigma_C)$ and
$\gl(\Sigma_C)$, offered by the author in \cite{Pa1}, where $C$ is
again a complete, irreducible and non-singular curve over a
perfect field, can also be deduced from Corollary \ref{th:gen-rec}
and Corollary \ref{th:rec}.

\begin{rem} Similar to \cite{ACK} and \cite{Pa2}, in this work we deduce the Weil Reciprocity Law from Theorem \ref{thm:GRLs} by using local arguments. We should note that a ``global'' proof of the expression
$$\{f,g\}_{\underset {x\in C}
\oplus A_x}^{\underset {x\in C} \oplus K_x} = 1\qquad \text{ for all } f,g \in
\Sigma_C^\times\, ,$$\noindent was offered by J. M. Mu{\~n}oz Porras and the author in \cite{MPa}.
\end{rem}

\medskip

\subsubsection{Hilbert Norm Residue Symbol}\label{sss:Hilbert-norm-reciprocity} Let $C$ now be an irreducible, complete and non-singular curve
over a finite perfect field $k$ that contains the $m^{th}$ roots
of unity.

    If $\# k = q$, one has the morphism of groups
$$\aligned {\phi}_m\colon k^\times &\longrightarrow {\mu}_m \\ x
&\longmapsto x^{\frac {q-1}{m}},\endaligned$$ \noindent and hence
$$\{f,g\}_{A_x,{\phi}_m}^{K_x} =  N_{k(x)/k}
\big [ \frac {f^{v_x(g)}}{g^{v_x(f)}} (x) \big ]^{\frac {q-1}{m}}
\in {\mu}_m.$$

    Arguing as above, from Corollary \ref{cor:gen-rec-vector} and
Corollary \ref{cor:rec-vector} we can deduce that
$$\begin{aligned}  \prod_{x\in C} [{\phi}_m (-1)]^{i(f,A_x,K_x)\cdot i(g,A_x,K_x)}\cdot
\{f,g\}_{A_x,{\phi}_m}^{K_x} &= \\ \prod_{x\in C} (-1)^{\deg
(x)v_x(f) v_x(g){\frac {q-1}{m}}}  N_{k(x)/k} \big [ \frac
{f^{v_x(g)}}{g^{v_x(f)}} (x) \big ]^{\frac {q-1}{m}}  &= \\
\prod_{x\in C} N_{k(x)/k} \big [ (-1)^{v_x(f)\cdot v_x(g)}\cdot
\frac {f^{v_x(g)}}{g^{v_x(f)}} (x) \big ]^{\frac {q-1}{m}} &= 1\,
,\end{aligned}$$\noindent which is the explicit formula of the
Reciprocity Law for the Hilbert Norm Residue Symbol given by H. L.
Schmidt in \cite{Sc}. And the same occurs with the reciprocity
laws for the generalizations of the Hilbert Norm Residue Symbol to
$\gl(n,\Sigma_C)$ and $\gl(\Sigma_C)$, also offered by the author
in \cite{Pa1}.

\bigskip

\subsubsection{Horozov Symbol and Parshin Symbol on a Surface.} \label{ss:Horozov-Parshin}

 Let $C$ be an irreducible and non-singular algebraic curve on a smooth, proper,
geometrically irreducible surface $S$ over an algebraically closed
field $k$. Similar to Subsection \ref{ss:sum-nu-surface}, let ${\Sigma}_S$ be the function field of $S$, and let
$v_C\colon {\Sigma}_S^{\times} \to {\mathbb Z}$ be the valuation induced by $C$, whose
residue class field is ${\Sigma}_C$ (the function field of $C$).

    If we fix a function $z\in
{\Sigma}_S^{\times}$ such that $v_C(z) = 1$, we can again consider a morphism of groups
$$\begin{aligned}\phi_z \colon {\Sigma}_S^{\times} &\longrightarrow {\Sigma}_C^{\times} \\ f &\longmapsto \{f, z\}_C^S \, ,\end{aligned}$$\noindent
i.e. $\phi_z (f) = [f\cdot z^{-v_c(f)}]_{\vert_C}$.

If $x\in C$ is a closed point and $<\cdot, \cdot>_x\colon \Sigma_C^{\times} \times \Sigma_C^{\times} \to k^\times$ is the tame symbol
associated with $x$, recently I. Horozov has defined in \cite{Ho} the following symbol:
$$\begin{aligned} (\cdot, \cdot, \cdot)_{C,x}^z\colon \Sigma_S^{\times}\times \Sigma_S^{\times} \times \Sigma_S^{\times} &\longrightarrow k^{\times} \\
(f,g,h) &\longmapsto <\phi_z(f), \phi_z(h)>_x^{v_C(g)} \cdot <\phi_z(g), -1>_x^{v_C(f)\cdot v_C(h)}\, .\end{aligned}$$

    This symbol depends on the choice of the parameter $z$ and it satisfies the reciprocity law:
    $$\prod_{x\in C} (f, g, h)_{C,x}^z = 1\, ,$$\noindent for all $f,g,h \in \Sigma_S^{\times}$.

    Arguing similar to the above, this reciprocity law can also be deduced from Corollary \ref{cor:gen-rec-vector} and
Corollary \ref{cor:rec-vector}.

\bigskip

    Moreover, if ${\overline {v_x^z}} (f) = i(\phi_z(f),A_x,K_x)$ for $x\in C$ and $f\in \Sigma_S^{\times}$,
given three functions $f,g,h \in \Sigma_S^{\times}$, the expression  $<f,g,h>_{(x,C)}$ = $$(-1)^{{\alpha}_{(x,C)}}\big
(\frac{f^{v_C(g)\cdot {\overline {v_x^z}}(h) - v_C(h)\cdot
{\overline {v_x^z}}(g)} }{g^{v_C(f)\cdot {\overline {v_x^z}}(h)-
v_C(h)\cdot {\overline {v_x^z}}(f)}}\cdot h^{v_C(f)\cdot {\overline
{v_x^z}}(g) - v_C(g)\cdot {\overline {v_x^z}}(f)}\big )_{\vert_C}
(p)\, ,$$\noindent where
$$\begin{aligned} {\alpha}_{(x,C)} &= v_C(f)\cdot v_C(g)\cdot {\overline {v_x^z}}(h) +
v_C(f)\cdot v_C(h)\cdot {\overline {v_x^z}}(g) +  v_C(g)\cdot
v_C(h)\cdot {\overline {v_x^z}}(f) +
\\ &+ v_C(f)\cdot {\overline {v_x^z}}(g)\cdot {\overline {v_x^z}}(h) +
v_C(g)\cdot {\overline {v_x^z}}(f)\cdot {\overline {v_x^z}}(h) +
v_C(h)\cdot {\overline {v_x^z}}(f)\cdot {\overline {v_x^z}}(g) \,
,\end{aligned}$$\noindent is the symbol introduced in 1985 by A. N. Parshin -\cite{Par}-. This symbol is independent of
the choice of the parameter $z$.

    In fact, the Horozov symbol is a refinement of the Parshin symbol, because
$$<f,g,h>_{(x,C)} = (f, g, h)_{C,x}^z \cdot (h, f, g)_{C,x}^z \cdot (g, h, f)_{C,x}^z\, ,$$ for all $f,g,h \in \Sigma_S^{\times}$.

    Thus, the reciprocity law for the Parshin symbol on a surface $$\prod_{x\in C} <f,g,h>_{(x,C)} = 1\, ,$$\noindent is also a direct
consequence of the statements of Corollary \ref{cor:gen-rec-vector} and
Corollary \ref{cor:rec-vector}.

\subsubsection{Horozov-Kerr 4-function local symbol.}\label{ss:Horozov-Kerr}

Similarly to the above, let $C$ again be an irreducible and non-singular algebraic curve on a smooth, proper,
geometrically irreducible surface $S$ over an algebraically closed
field $k$.

    If ${\Sigma}_S$ is the function field of $S$, $v_C\colon {\Sigma}_S^{\times} \to {\mathbb Z}$ is the discrete valuation induced by $C$; $\{\cdot,\cdot\}_C^S$ is the commutator of the central extension of groups (\ref{eq:surfac-curve}), and
${\Sigma}_C$  is the function field of $C$, we have the tame symbol: $$\begin{aligned} <\cdot, \cdot>_C \colon \Sigma_S^{\times} \times \Sigma_S^{\times} &\longrightarrow \Sigma_C^\times \\ (f,g) &\longmapsto (-1)^{v_C(f)\cdot v_C(g)} \cdot (\frac{f^{v_C(g)}}{g^{v_C(f)}})_{\vert_C}\, ,\end{aligned}$$\noindent where $\{f,g\}_C^S = (\frac{f^{v_C(g)}}{g^{v_C(f)}})_{\vert_C}$.

    Let us again consider a parameter $z\in {\Sigma}_S^{\times}$ such that $v_C(z) = 1$. As above, for every $f\in \Sigma_S^\times$,
we have $\phi_z (f) = \{f,z\}_C^S \in \Sigma_C^\times$.

    If $\langle \cdot, \cdot \rangle_x\colon \Sigma_C^{\times} \times \Sigma_C^{\times} \to k^\times$ is the tame symbol
associated with a point $x\in C$, the Horozov-Kerr 4-function local symbol $\{\cdot,\cdot,\cdot,\cdot\}_{C,x} \colon \Sigma_S^{\times} \times \Sigma_S^{\times} \times \Sigma_S^{\times} \times \Sigma_S^{\times} \rightarrow k^{\times}$ (see Definition 3.6 and appendix in \cite{HoK}) can be defined as follows:
$$ \{f_1,f_2,f_3,f_4\}_{C,x} = [\prod_{i=1}^4 \langle \phi_z(f_i),-1\rangle_x^{\prod_{j\ne i}v_C(f_j)} ]\cdot \langle <f_1,f_2>_C, <f_3,f_4>_C \rangle_x\, ,$$\noindent for all $f_1,f_2,f_3,f_4 \in \Sigma_S^\times$. It is clear that this symbol is independent of the choice of the parameter $z$.

Thus, the reciprocity law $$\prod_{x\in C} \{f_1,f_2,f_3,f_4\}_{C,x} = 1$$ can be deduced from the statements of Corollary \ref{th:gen-rec}, Corollary \ref{cor:gen-rec-vector}, Corollary \ref{th:rec} and Corollary \ref{cor:rec-vector}.

\subsection{Explicit reciprocity laws for the abstract residue.} \label{ss:abstract-residue-explicit}

    Keeping the notation of Subsection \ref{ss:abst-resi}, let $V$ be an arbitrary $k$-vector space. For each $k$-subspace $A\subseteq V$ we
write $$E(V,A) = \{g \in {\ed}_k (V) \text{
such that } g (A) < A\}\, .$$

    Let $\{V_i\}_{i\in I}$ again be a family of $k$-subspaces of
$V$.

\begin{defn} We shall use the term ``the restricted group of endomorphisms'' associated
with $\{V_i\}_{i\in I}$, $E (V, \{V_i\}_{i\in I})$, to refer to the subgroup of
${\ed}_k (V)$ defined by:
$$E (V, \{V_i\}_{i\in I}) = \{g \in {\ed}_k (V) \text{
such that } g (V_i) < V_i \text { for all i, }$$\noindent
$$\text{ and } g(V_i) = V_i \text{ or } g(V_i) = \{0\} \text { for almost all i}\}\,
.$$
\end{defn}

    If $V_I = \prod_{i\in I} V$, the diagonal embedding induces a natural immersion of groups $E (V, \{V_i\}_{i\in I}) \hookrightarrow \ed_k (V_I)$.

Let $K$ be a $k$-algebra such that $K$ is a subgroup of $E (V, \{V_i\}_{i\in I})$.

Thus, similar to the above, from the commutative diagram

$$\xymatrix{V \ar@{^(->}[r]
 & V_I \\
{V_i \,} \ar@{^(->}[u] \ar@{^(->}[r] &
{\prod_{i\in I} V_i} \ar@{_(->}[u] \, ,}$$\noindent

and from property (\ref{prop:res-t-nm}), we have that $$\res_{V_i}^{V_I} (gdh) = \res_{V_i}^{V} (gdh)\, ,$$\noindent for all $g,h \in K$.

Let us again consider the set of $k$-subspaces of $V_I$, $$X = \{\{0\}, V_I, \prod_{j\in J} V_j\}_{J\subseteq I}\, ,$$\noindent and the X-symbol
$$\begin{aligned} f_{g,h}\colon X &\longrightarrow k \\ A &\longmapsto \res_{A}^{V_I} (gdh)\, ,\end{aligned}$$\noindent where $g,h \in K$.

If $B_J = \prod_{j\in I-J} V_j$, it is clear that the X-symbol $f_{g,h}$ satisfies the hypothesis of Theorem \ref{thm:GRLs} for the family $\{V_i\}_{i\in I}$, and hence
\begin{equation} \label{eq:erada-34} \res_{\prod_{i\in I} V_i}^{V_I} (gdh) = \sum_{i\in I} \res_{V_i}^{V_I} (gdh) = \sum_{i\in I} \res_{V_i}^{V} (gdh)\, .\end{equation}

    Let us now fix two families of subspaces of $V$, $\{V_i\}_{i\in I}$ and $\{W_i\}_{i\in
I}$, such that $V_i \subseteq W_i$ for all $i\in I$ and $g\in \ed_k (W_i)$ for every $g\in K$.

    Let us now write $A_{I} = \prod_{i\in I} V_i$,
$W_{I} =  \prod_{i\in I} W_j$. It is clear that $K$ is a subgroup $E
(W_I, A_I)$ by means of the diagonal embedding.

    If we replace $V_I$ with $W_I$ in formula (\ref{eq:erada-34}), from Theorem \ref{thm:GRLs} we also obtain the following result:

\begin{cor} \label{th:res-gen-rec}  Let $\{V_i\}_{i\in I}$ and $\{W_i\}_{i\in I}$
be two families of subspaces of $V$ such that $V_i \subseteq W_i$
for all $i\in I$ and $g \in \ed_k (W_i)$ for every $g \in K$. If $g,h \in K$, then:
$$\res^{W_I}_{A_I} (gdh) = \sum_{i\in I} \res^{W_i}_{V_i} (gdh)\,
,$$\noindent where only a finite number of terms are different
from 0.

Accordingly, if $\res^{W_I}_{A_I} = 0$, then:
$$\sum_{i\in I} \res^{W_i}_{V_i} (gdh) = 0\,
,$$\noindent where only a finite number of terms are different
from 0.
\end{cor}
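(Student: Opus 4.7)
The plan is to replicate the argument that yielded equation (\ref{eq:erada-34}), now working inside $W_I$ in place of $V_I$, and then to collapse the ambient space via property (\ref{prop:res-t-nm}). Concretely, I would introduce the set of $k$-subspaces of $W_I$, $X = \{\{0\}, W_I, \prod_{j\in J} V_j\}_{J\subseteq I}$, with each $V_j$ viewed as sitting in the $j$-th factor $W_j$ of $W_I$, and consider the X-symbol
$$f_{g,h}\colon X \longrightarrow k\,,\qquad A\longmapsto \res_{A}^{W_I} (gdh)\,.$$
The X-symbol axioms follow from the general properties of the abstract residue collected in Subsection \ref{ss:abst-resi}: commensurability invariance is property (2), additivity is equation (\ref{eq:basic}), and the vanishing at $\{0\}$ and at $W_I$ is a standard consequence of the Tate trace formalism.

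Next I would verify the hypotheses of Theorem \ref{thm:GRLs} with $A_i=V_i$ (in the $i$-th slot of $W_I$) and $B_J=\prod_{j\in I-J}V_j$. Conditions (a) and (b) are purely combinatorial and come from the disjoint-slot structure of the product decomposition, exactly as in the paragraph preceding (\ref{eq:erada-34}). For condition (c) I would exploit the hypothesis $K\subseteq E(V,\{V_i\}_{i\in I})$: any $g\in K$ satisfies $g(V_i)=V_i$ or $g(V_i)=\{0\}$ for all but finitely many $i$, and on those components the individual residue $\res_{V_i}^{W_I}(gdh)$ vanishes, either by (\ref{eqprop:res-equal}) when $gV_i=V_i$, or by a direct trace computation when $gV_i=\{0\}$. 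The same argument forces $\res_{B_J}^{W_I}(gdh)=0$ as soon as $J$ absorbs the finite exceptional sets attached to both $g$ and $h$.

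Theorem \ref{thm:GRLs} then gives $f_{g,h}(B_\emptyset)=\sum_{i\in I}f_{g,h}(A_i)$, namely
$$\res^{W_I}_{A_I}(gdh)=\sum_{i\in I}\res^{W_I}_{V_i}(gdh)\,.$$
Applying property (\ref{prop:res-t-nm}) to the chain $V_i\subset W_i\subset W_I$, where the middle inclusion is $K$-stable by hypothesis, collapses each summand to $\res^{W_i}_{V_i}(gdh)$ and yields the first displayed formula of the corollary; the second formula is then immediate.

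The step I expect to be the main obstacle is the verification of condition (c), and in particular the vanishing $\res_{B_J}^{W_I}(gdh)=0$ for $J$ sufficiently large. This requires careful bookkeeping of which of the Tate ideals $E_0, E_1, E_2$ the representatives of $g$ and $h$ fall into once these operators stabilize or annihilate the remaining slots; everything else is a routine transposition of the $V_I$-argument to the $W_I$-setting.
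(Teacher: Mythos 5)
Your proposal is correct and follows essentially the same route as the paper: the paper derives equation (\ref{eq:erada-34}) by applying Theorem \ref{thm:GRLs} to the $X$-symbol $A \mapsto \res_A^{V_I}(gdh)$ with $B_J = \prod_{j\in I-J}V_j$, then obtains the corollary by ``replacing $V_I$ with $W_I$'' and invoking property (\ref{prop:res-t-nm}) to reduce $\res_{V_i}^{W_I}$ to $\res_{V_i}^{W_i}$, which is exactly your argument. Your extra care over condition (c) --- using that elements of $E(V,\{V_i\}_{i\in I})$ satisfy $g(V_i)=V_i$ or $g(V_i)=\{0\}$ for almost all $i$, together with (\ref{eqprop:res-equal}) and the trivial vanishing when a representative lies in $E_2$ --- fills in a step the paper dismisses as ``clear.''
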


\begin{rem} Note that the statement of Corollary \ref{th:res-gen-rec} holds when $A_{I} = \oplus_{i\in I} V_i$,
$W_{I} =  \oplus_{i\in I} W_j$, and $\res^{W_I}_{A_I} (gdh) = 0$ for $g,h \in K$.
\end{rem}

    Let us now consider a commutative group $G$ and a morphism of
groups \linebreak $\varphi\colon k\times \to G$.

If $X$ is again the above family of $k$-subspaces of $V_I$, from the X-symbol
$$\begin{aligned} f_{g,h}^{\varphi}\colon X &\longrightarrow G \\ A &\longmapsto \varphi [\res_{A,\varphi}^{V_I} (gdh)]\, ,\end{aligned}$$\noindent where $g,h \in K$, with the same arguments of Corollary \ref{th:res-gen-rec} one has that:

\begin{cor} \label{cor:res-gen-rec-vector} Let $\{V_i\}_{i\in I}$ and $\{W_i\}_{i\in I}$
be two families of subspaces of $V$ such that $V_i \subseteq W_i$ and $g\in \ed_k (W_i)$
for every $g\in K$. If $G$ is a commutative group and $\varphi\colon k
\to G$ is a morphism of groups, then:
$$ \varphi (\res^{W_I}_{A_I} (gdh)) = \sum_{i\in I} \varphi [\res^{W_i}_{V_i} (gdh)]\,
,$$\noindent where $g,h \in K$, and only a finite number of terms are different
from 0.

Accordingly, if $\res^{W_I}_{A_I} (gdh) = 0$, then:
$$\sum_{i\in I} \varphi [\res^{W_i}_{V_i} (gdh)] = 0 \in G\,
,$$\noindent where only a finite number of terms are different
from 0.
\end{cor}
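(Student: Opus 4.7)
The plan is to mimic the proof of Corollary \ref{th:res-gen-rec} with the $k$-valued residue replaced by its composition with the group homomorphism $\varphi$. Using additive notation in $G$, as in the statement, this turns the additive identity (\ref{eq:basic}) for the abstract residue into the corresponding identity for an $X$-symbol with values in $G$, so the General Reciprocity Law applies directly.

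First, I would fix $g, h \in K$ and introduce the map
$$f^{\varphi}_{g,h}\colon X \longrightarrow G, \qquad A \longmapsto \varphi\bigl(\res_A^{W_I}(gdh)\bigr),$$
on the family $X = \{\{0\}, W_I, \prod_{j\in J}V_j\}_{J\subseteq I}$ of subspaces of $W_I$ introduced just before the statement. Verifying that $f^{\varphi}_{g,h}$ is an $X$-symbol is immediate from the three properties of $\res_A^V(gdh)$ recalled in Subsection \ref{ss:abst-resi}: the vanishing on $\{0\}$ and on $W_I$ transports through $\varphi$ since $\varphi(0) = 0_G$; invariance under commensurability is property \ref{prop:two}; and the key additive identity (\ref{eq:basic}) becomes, after applying $\varphi$, the defining identity of an $X$-symbol in $G$.

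Next, setting $A_i$ to be the subspace of $W_I$ obtained by placing $V_i$ in slot $i$ and zeros elsewhere, and $B_J = \prod_{j \in I - J} V_j$, I would verify the hypotheses of Theorem \ref{thm:GRLs} for $f^{\varphi}_{g,h}$: conditions (a) and (b) are immediate from the product structure, exactly as in Corollary \ref{th:res-gen-rec}; for condition (c), Lemma \ref{l:erae} expresses $f^{\varphi}_{g,h}(B_J)$ as a finite sum of terms $f^{\varphi}_{g,h}(A_i)$, each of which vanishes when the corresponding $\res^{W_i}_{V_i}(gdh) = 0$ by property (\ref{prop:res-t-nm}) applied to $V_i \subset W_i \subset W_I$ via the diagonal embedding.

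Finally, the conclusion of Theorem \ref{thm:GRLs} yields
$$\varphi\bigl(\res^{W_I}_{A_I}(gdh)\bigr) \;=\; \sum_{i\in I} \varphi\bigl(\res^{W_I}_{A_i}(gdh)\bigr) \;=\; \sum_{i\in I} \varphi\bigl(\res^{W_i}_{V_i}(gdh)\bigr),$$
where the second equality again uses property (\ref{prop:res-t-nm}) to shrink the ambient space from $W_I$ down to $W_i$. This is the first claimed identity, and the \emph{accordingly} statement is then immediate by substituting the hypothesis $\res^{W_I}_{A_I}(gdh) = 0$ and using $\varphi(0) = 0_G$. The only step requiring any real thought will be checking hypothesis (c) of Theorem \ref{thm:GRLs}, but this reduces to the same routine finite-sum argument based on (\ref{eq:basic}) and Lemma \ref{l:erae} that underpins each of the earlier corollaries in this section.
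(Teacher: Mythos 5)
Your route is precisely the paper's: the author obtains this corollary in one line by composing the $X$-symbol $A\mapsto \res_A^{W_I}(gdh)$ on $X=\{\{0\},W_I,\prod_{j\in J}V_j\}_{J\subseteq I}$ with $\varphi$ and invoking ``the same arguments'' as Corollary \ref{th:res-gen-rec}, i.e.\ Theorem \ref{thm:GRLs} with $B_J=\prod_{j\in I-J}V_j$ together with property (\ref{prop:res-t-nm}) to pass from $\res_{V_i}^{W_I}$ to $\res_{V_i}^{W_i}$. Your verification that composing with the group morphism $\varphi$ preserves the three defining properties of an $X$-symbol, and your use of (\ref{prop:res-t-nm}) twice (once to identify $f^{\varphi}_{g,h}(A_i)$ with $\varphi[\res^{W_i}_{V_i}(gdh)]$, once implicitly in the normalization), match the paper's intent exactly.

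The one point where you supply detail the paper omits is also the one point where your argument does not work as written: hypothesis (c) of Theorem \ref{thm:GRLs}. When $I\setminus J$ is infinite --- the case of interest in every application --- $B_J=\prod_{j\in I\setminus J}V_j$ is \emph{not} the sum of finitely many of the $A_i$, so Lemma \ref{l:erae} cannot express $f^{\varphi}_{g,h}(B_J)$ as a finite sum of the $f^{\varphi}_{g,h}(A_i)$, and (c) does not follow from the vanishing of the individual terms. What actually makes (c) hold for $J$ large enough (which is all the proof of Theorem \ref{thm:GRLs} uses) is the definition of $E(V,\{V_i\}_{i\in I})$: for almost all $i$ one has $g(V_i)=V_i$ or $g(V_i)=\{0\}$, and likewise for $h$; choosing $J$ to contain the exceptional indices, one splits $B_J$ by (\ref{eq:basic}) into the at most four blocks determined by these alternatives, and each block contributes $0$, either by (\ref{eqprop:res-equal}) or because $\res(gdh)=0$ when $g$ or $h$ acts as zero there. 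The paper itself only says ``it is clear,'' so your write-up is no less complete than the source, but you should replace the appeal to Lemma \ref{l:erae} in step (c) by this invariance argument.
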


\begin{rem} Similar to the above, it is clear that the statement of Corollary \ref{cor:res-gen-rec-vector} holds when $A_{I} = \oplus_{i\in I} V_i$,
$W_{I} =  \oplus_{i\in I} W_j$, and $\res^{W_I}_{A_I} (gdh) = 0$, for $g,h \in K$.
\end{rem}

\subsubsection{Residue Theorem for reciprocity-admissible families of subspaces.}\label{sss:recip-residue}

Recall from Subsection \ref{sss:recip-tame} that a ``reciprocity-admissible family'' is a set of subspaces $\{V_i\}_{i\in I}$ of $V$ satisfying
the following properties:
\begin{itemize}
\item $\sum_{i\in I} V_i \sim V$ \item $V_{i}  \cap [\underset
{j\ne i} \sum V_j] \sim \{0\}$ for all ${i\in I}$.
\end{itemize}

    If we consider again the family of $k$-subspaces of $V$
$${\tilde X} = \{\{0\}, V, V_i, V_{i_1} + \dots + V_{i_r}, V_{k_1}\cap \dots \cap V_{k_s}, \sum_{j\in J} V_j\}_{\{i,i_n,k_m\} \in I; J\subseteq I}\, ,$$\noindent the ${\tilde X}$-symbol
$$\begin{aligned} {\tilde f}_{g,h}\colon {\tilde X} &\longrightarrow k \\ A &\longmapsto \res_{A}^{V} (gdh)\, ,\end{aligned}$$\noindent with $g,h \in K$,
satisfies the hypothesis of Theorem \ref{thm:GRLs} with $B_J = \sum_{j\in I-J} V_j$ for the family $\{V_i\}_{i\in I}$.

Hence, since $\sum_{i\in I} V_i \sim V$, then $<\sigma, \tau>_{\sum_{i\in I} V_i}^{V} = 1$ -see expression (\ref{eprop:rqerq})-, and we obtain the following result:

\begin{cor} \label{th:res-rec} If $\{V_i\}_{i\in I}$ is a reciprocity-admissible family of
subspaces of $V$, for all $g,h \in K$ one has that:
$$\sum_{i\in I} \res^{V}_{V_i} (gdh) = 0\,
,$$\noindent where only a finite number of terms are different
from 0.
\end{cor}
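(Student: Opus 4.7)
The plan is to invoke the General Reciprocity Law (Theorem \ref{thm:GRLs}) for the $\tilde X$-symbol $\tilde f_{g,h}$ with target group $(k,+)$, taking index family $A_i = V_i$ and complementary subspaces $B_J = \sum_{j \in I-J} V_j$ exactly as suggested by the paragraphs preceding the statement, and then to exploit $\sum_{i \in I} V_i \sim V$ to identify $\tilde f_{g,h}(B_{\emptyset})$ with zero.

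My first task is to confirm that $\tilde f_{g,h}\colon \tilde X \to k$ really is an $\tilde X$-symbol in the additive sense. Vanishing on $\{0\}$ is immediate since the trace over the zero space is zero; commensurability invariance is property (2) of Subsection \ref{ss:abst-resi}; and the cocycle identity $\tilde f_{g,h}(A) + \tilde f_{g,h}(B) = \tilde f_{g,h}(A+B) + \tilde f_{g,h}(A \cap B)$ is exactly expression (\ref{eq:basic}). The most delicate axiom is $\tilde f_{g,h}(V) = 0$, which is the residue analog of (\ref{eprop:rqerq}); I would deduce it by noting that when the reference subspace is all of $V$, the subspaces $E_1$ and $E_2$ of $\ed_k(V)$ collapse so that any admissible pair $f_1, g_1$ gives a commutator $[f_1,g_1] \in E_0$ whose trace on $V$ vanishes by the zero-trace clause stated just before Definition \ref{defn:abst-res}.

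Next I would verify the three hypotheses of Theorem \ref{thm:GRLs}. Condition (a) $B_{J'} = B_J + \sum_{i \in J-J'} A_i$ is immediate from the definition of $B_J$. Condition (b), independence for commensurability of $\{V_j\}_{j \in J}$ together with $B_J$, follows by a short induction from the defining property $V_i \cap \sum_{j \neq i} V_j \sim \{0\}$ of a reciprocity-admissible family, in the spirit of Lemma \ref{l:erae}. For condition (c), assuming $\tilde f_{g,h}(V_i) = 0$ for all $i \notin J$, I would combine $B_J + \sum_{j \in J} V_j = \sum_{i \in I} V_i \sim V$ with $B_J \cap \sum_{j \in J} V_j \sim \{0\}$ to deduce that $B_J$ is commensurable with $V$ modulo the finite-dimensional correction from the finitely many $V_j$ with $j \in J$; commensurability invariance together with the vanishing on $V$ then yields $\tilde f_{g,h}(B_J) = 0$. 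The finiteness statement (only finitely many nonzero terms) follows from $K \subset E(V, \{V_i\}_{i \in I})$: for almost all $i$ either $g(V_i) = V_i$, in which case (\ref{eqprop:res-equal}) gives $\res^V_{V_i}(gdh) = 0$, or $g(V_i) = \{0\}$, in which case $g_1 = 0$ is admissible in the residue definition and the commutator trace is trivially zero.

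With all hypotheses verified, Theorem \ref{thm:GRLs} produces $\tilde f_{g,h}(B_{\emptyset}) = \sum_{i \in I} \res^V_{V_i}(gdh)$. Since $B_{\emptyset} = \sum_{i \in I} V_i \sim V$, commensurability invariance and the vanishing $\tilde f_{g,h}(V) = 0$ force $\tilde f_{g,h}(B_{\emptyset}) = 0$, and the reciprocity law follows. I expect the main obstacle to be the joint verification of the vanishing on $V$ and condition (c), since both require pushing Tate's finite-potent trace through the commensurability calculus; once those two points are secured, the remainder is purely formal bookkeeping in the style of Corollary \ref{th:rec}.
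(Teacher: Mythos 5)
Your proposal follows exactly the paper's own route: the paper proves this corollary by applying Theorem \ref{thm:GRLs} to the ${\tilde X}$-symbol ${\tilde f}_{g,h}(A)=\res_A^V(gdh)$ with $B_J=\sum_{j\in I-J}V_j$, using property (2) and expression (\ref{eq:basic}) to check the symbol axioms and the commensurability $\sum_{i\in I}V_i\sim V$ to force the vanishing of $f(B_{\emptyset})$. Your additional verifications (the vanishing $\res_V^V(gdh)=0$, conditions (a)--(c), and the finiteness of the support via $K\subset E(V,\{V_i\}_{i\in I})$) are details the paper leaves implicit, but the argument is the same.
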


And, if $\varphi\colon k \longrightarrow G$ is again a morphism of groups, we have that

\begin{cor} \label{th:res-genio-rec} If $\{V_i\}_{i\in I}$ is a reciprocity-admissible family of
subspaces of $V$, for all $g,h \in K$ one has that:
$$\sum_{i\in I} \varphi [\res^{V}_{V_i} (gdh)] = 0 \in G\,
,$$\noindent where only a finite number of terms are different
from 0.
\end{cor}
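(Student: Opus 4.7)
The plan is very short: this statement is nothing more than Corollary \ref{th:res-rec} pushed forward along the group homomorphism $\varphi$. Corollary \ref{th:res-rec} gives
$$\sum_{i\in I} \res^{V}_{V_i}(gdh) = 0$$
with only finitely many nonzero summands. Since $\varphi\colon k\to G$ is a morphism of the additive group $(k,+)$ into $G$, one has $\varphi(0)=0$ and $\varphi$ preserves finite sums, so applying $\varphi$ term by term produces the claimed identity. The finite support of the collection $\{\varphi[\res^{V}_{V_i}(gdh)]\}_{i\in I}$ is inherited from the finite support of the residues, because $\varphi(0)=0$ sends the vanishing terms to the neutral element of $G$.

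An equivalent route, exactly parallel to the passage from Corollary \ref{th:res-rec} to Corollary \ref{cor:res-gen-rec-vector}, is to re-apply Theorem \ref{thm:GRLs} directly. Define the ${\tilde X}$-symbol
$${\tilde f}^{\varphi}_{g,h}\colon {\tilde X} \longrightarrow G, \qquad A \longmapsto \varphi[\res_{A}^{V}(gdh)].$$
The additivity relation (\ref{eq:basic}) for the abstract residue, combined with the fact that $\varphi$ is a homomorphism, translates into the symbol identity (\ref{eq:tanri}) for ${\tilde f}^{\varphi}_{g,h}$; commensurability-invariance and vanishing on $\{0\}$ are inherited from the corresponding properties of $\res$. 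Choose $B_J = \sum_{j\in I-J} V_j$; hypotheses (a) and (b) of Theorem \ref{thm:GRLs} hold by the definition of a reciprocity-admissible family, and condition (c) is ensured because $\sum_{i\in I} V_i \sim V$ together with (\ref{eqprop:res-equal}) force ${\tilde f}^{\varphi}_{g,h}(B_\emptyset) = \varphi(0) = 0$ in $G$. Theorem \ref{thm:GRLs} then yields the desired formula.

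There is no real obstacle here; the entire content is already packaged in Theorem \ref{thm:GRLs} (or equivalently in Corollary \ref{th:res-rec}), and $\varphi$ merely transports the additive identity into $G$. The only thing one must verify is the trivial compatibility $\varphi(0)=0$ and $\varphi$-additivity on finite sums, which is built into the definition of a group morphism.
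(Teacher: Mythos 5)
Your proposal is correct and matches the paper's own (essentially one-line) derivation: the paper simply states the corollary as the image of Corollary \ref{th:res-rec} under the group morphism $\varphi$, which is exactly your first route, and your alternative via the ${\tilde X}$-symbol $A \mapsto \varphi[\res_A^V(gdh)]$ is the same mechanism the paper uses for the analogous Corollary \ref{cor:res-gen-rec-vector}. Nothing is missing.
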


\subsubsection{Residue Theorem for a Complete Curve.}\label{sss:residue-theorem-proof}

 Let $C$ again be a
non-singular and irreducible curve over a perfect field $k$, and
let $\Sigma_C$ be its function field. If $x\in C$ is a closed
point, we set $A_x = {\widehat {\O_x}}$ (the completion of the
local ring $\O_x$), and $K_x = (\widehat {\O_x})_0$ (the field of
fractions of ${\widehat {\O_x}}$, which coincides with the
completion of ${\Sigma}_C$ with respect to the valuation ring
$\O_x$).

    If we set $$V = \underset
{x\in C} {{\prod}'} K_x = \{ f = (f_x) \text { such that } f_x \in
K_x \text { and } f_x\in A_x \text { for almost all } x\}\,
,$$\noindent then $\Sigma_C$ can be regarded as a subspace of $V$ by
means of the diagonal embedding.

    It is clear that $K = \Sigma_C$ is a subgroup of $E(V,\{A_x\}_{x\in C})$.

    Let us again consider the family of $k$-subspaces of $V$
consisting of $V_1 = \prod_{x\in C} A_x$ and $V_2 =
\Sigma_C$. Since $C$ is a complete curve, then $\{V_1,V_2\}$ is a
reciprocity-admissible family of $V$.

    Thus, bearing in mind that $\Sigma_C \subset E
(V,\{V_1,V_2\})$, and that $fV_2 = V_2$ or $f V_2 = \{0\}$ for all
$f\in \Sigma_C$, it follows from expression (\ref{eqprop:res-equal}) and Corollary \ref{th:res-rec} that
$$\res_{V_1}^V (gdh) = 0\, ,$$\noindent for all $g,h \in
\Sigma_C$.

    Moreover, since $V = V_1 + \underset {x\in C} \oplus
K_x$, $f[\underset {x\in C} \oplus K_x] = \underset {x\in C} \oplus K_x$ or $f[\underset {x\in C} \oplus K_x] = \{0\}$  for all
$f\in \Sigma_C$, and $V_1 \cap \underset {x\in C}
\oplus K_x = \underset {x\in C} \oplus A_x$, then it follows
from Lemma \ref{lem:res-izq-mon} that
$$\res_{V_1}^V (gdh) = \res_{\underset {x\in C}
\oplus A_x}^{\underset {x\in C} \oplus K_x} (gdh) = 0\, ,$$\noindent for all $g,h \in
\Sigma_C$.

    Thus, again setting:
\begin{itemize}
\item $V_x = \dots \oplus \{0\} \oplus A_x \oplus \{0\} \oplus
\dots$, \item $W_x = \dots \oplus \{0\} \oplus K_x \oplus \{0\}
\oplus \dots$,
\end{itemize}
the families $\{V_x\}_{x\in C}$ and $\{W_x\}_{x\in C}$ of
$k$-subspaces of $V$ satisfy the hypothesis of Corollary
\ref{th:res-gen-rec}, and since
$$\res_{A_x}^{K_x} (gdh) = \tr_{k(x)/k} [\res_x(gdh)]\, ,$$\noindent
for all $g,h \in \Sigma_C$, where $k(x)$ is the residue
class field of the closed point $x$, and $\tr_{k(x)/k}$ is the trace of the extension
$k\hookrightarrow k(x)$, then we have that
$$\sum_{x\in C} \tr_{k(x)/k} [\res_x(gdh)] = 0 \in
k\, ,$$\noindent which is the classical expression of the Residue Theorem for a complete curve over a perfect field.

\subsubsection{Reciprocity Law for the Segal-Wilson pairing.}\label{ss:Segal-Wilson-reciprocity}

    Similar to Subsection \ref{sss:arbitrary-Segal-Wilson}, let $k$ be a field of characteristic zero, and let $K$ be a commutative $k$-algebra.

    If $V$ is a $K$-module, let $k((z))$ be the field of Laurent series.

    Let $\{V_i\}_{i\in I}$ again be a family of $k$-subspaces of
$V$, and we consider again the subgroup of
${\ed}_k (V)$ defined by:
$$E (V, \{V_i\}_{i\in I}) = \{g \in {\ed}_k (V) \text{
such that } g (V_i) < V_i \text { for all i, }$$\noindent
$$\text{ and } g(V_i) = V_i \text{ or } g(V_i) = \{0\} \text { for almost all i}\}\,
.$$

    Let us assume that $K$ is a subgroup of $E (V, \{V_i\}_{i\in I})$.

    For each subspace $V_i$, and for all $g,h\in K$, the function:
 \begin{align*}
c_{V_i}^V\colon K\times K&\to k((z))^\times \\
(g,h)&\mapsto c_{V_i}^V(g,h):=\exp_{z^{2}}\big(\frac{1}{2}\res_{V_i}^{V}(gdh)\big) \, ,
\end{align*}
is the $2$-cocycle of $K$ with coefficients in $k((z))^\times$ defined in Subsection \ref{sss:arbitrary-Segal-Wilson}.

    Let us write again $A_{I} = \prod_{i\in I} V_i$,
$W_{I} =  \prod_{i\in I} W_j$. It is clear that $K$ is a subgroup $E
(W_I, A_I)$ by means of the diagonal embedding.

    Thus, it follows from the morphism of groups $$\begin{aligned} k &\longrightarrow k((z)))^\times \\
    x &\longmapsto \exp_{z^{2}}\big(\frac{1}{2} x\big)\, , \end{aligned}$$\noindent and from Corollary \ref{cor:res-gen-rec-vector} that:

\begin{cor} \label{cor:segal-gen-rec-vector} Let $\{V_i\}_{i\in I}$ and $\{W_i\}_{i\in I}$
be two families of subspaces of $V$ such that $V_i \subseteq W_i$ and $g\in \ed_k (W_i)$
for every $g\in K$. We thus have:
$$ c^{W_I}_{A_I} (g,h) = \prod_{i\in I} c^{W_i}_{V_i} (g,h)\,
,$$\noindent where $g,h \in K$, and only a finite number of terms are different
from 1.

Accordingly, if $c^{W_I}_{A_I} (g,h) = 1$, then:
$$\prod_{i\in I} c^{W_i}_{V_i} (g,h) = 1 \in k((z)))^\times\,
,$$\noindent where only a finite number of terms are different
from 1.
\end{cor}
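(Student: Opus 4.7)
The plan is to apply Corollary \ref{cor:res-gen-rec-vector} to the group morphism
$$\varphi\colon (k,+) \longrightarrow (k((z))^\times,\cdot),\qquad \varphi(x) = \exp_{z^{2}}\!\bigl(\tfrac{1}{2}x\bigr),$$
which has already been introduced just before the statement. The first step is to verify that $\varphi$ really is a morphism of groups; in characteristic zero this reduces to the standard formal identity $\exp_{z^{2}}(a+b) = \exp_{z^{2}}(a)\cdot\exp_{z^{2}}(b)$, which is valid because every monomial of $\exp_{z^{2}}$ carries a positive power of $z$ (so the series multiply without convergence issues in $k((z))$) and because $k$ contains the reciprocals $1/n!$.

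The content of the proof is then to observe that, by the explicit formula for the Segal-Wilson cocycle recorded at the end of Subsection \ref{ss:Segal-Sil-Esther},
$$c_U^W(g,h) = \exp_{z^{2}}\!\bigl(\tfrac{1}{2}\res_U^W(gdh)\bigr) = \varphi\bigl(\res_U^W(gdh)\bigr)$$
for any $k$-subspace $U$ of a $K$-module $W$ with $fU<U$ for every $f\in K$. Specialising this identification to $(U,W)=(A_I,W_I)$ and to each $(V_i,W_i)$, and then applying $\varphi$ to both sides of the equality in Corollary \ref{cor:res-gen-rec-vector}, converts the sum of residues into a product of Segal-Wilson cocycles, yielding exactly
$$c_{A_I}^{W_I}(g,h) = \prod_{i\in I} c_{V_i}^{W_i}(g,h).$$
The finiteness clause propagates through the same mechanism: the vanishing of $\res_{V_i}^{W_i}(gdh)$ for almost all $i$, which is guaranteed by the definition of $E(V,\{V_i\}_{i\in I})$ together with property (\ref{eqprop:res-equal}) (and the fact that the residue on the zero subspace vanishes), translates at once into $c_{V_i}^{W_i}(g,h)=\varphi(0)=1$ for almost all $i$. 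The second assertion of the corollary is then an immediate specialisation of the first: if $c_{A_I}^{W_I}(g,h)=1$, the displayed product equals $1$ in $k((z))^\times$.

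The only genuine bookkeeping, and hence what I view as the main (but minor) obstacle, is the notational switch between the additive convention used in the statement of Corollary \ref{cor:res-gen-rec-vector} and the multiplicative convention of $k((z))^\times$. This amounts to nothing more than reading the commutative group $G$ in that corollary as being written multiplicatively; no new argument is required, so the proof is essentially a one-line reduction once the homomorphism $\varphi$ has been identified.
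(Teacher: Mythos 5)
Your proposal is correct and follows essentially the same route as the paper, which likewise derives the corollary by applying Corollary \ref{cor:res-gen-rec-vector} to the group morphism $x \mapsto \exp_{z^{2}}(\tfrac{1}{2}x)$ from $(k,+)$ to $k((z))^\times$ and using the identity $c_{U}^{W}(g,h) = \exp_{z^{2}}(\tfrac{1}{2}\res_{U}^{W}(gdh))$. The only difference is that you explicitly check that this map is a homomorphism and spell out the additive-to-multiplicative translation, details the paper leaves implicit.
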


\begin{rem} Similar to the above, it is clear that the statement of Corollary \ref{cor:segal-gen-rec-vector} holds when $A_{I} = \oplus_{i\in I} V_i$,
$W_{I} =  \oplus_{i\in I} W_j$, and $c^{W_I}_{A_I} (g,h) = 1$ for $g,h \in K$.
\end{rem}

Moreover, from Corollary \ref{th:res-genio-rec} one deduces that

\begin{cor} \label{th:segal-res-genio-rec} If $\{V_i\}_{i\in I}$ is a reciprocity-admissible family of
subspaces of $V$, for all $g,h \in K$ one has that:
$$\prod_{i\in I} c_{V_i} (g,h) = 1 \in k((z))^\times\,
,$$\noindent where only a finite number of terms are different
from 1.
\end{cor}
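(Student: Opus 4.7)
The plan is to recognize the final statement as a direct consequence of Corollary \ref{th:res-genio-rec} once one chooses the appropriate morphism of groups $\varphi$. Concretely, define
$$\varphi\colon k \longrightarrow k((z))^\times, \qquad x \longmapsto \exp_{z^{2}}\Big(\tfrac{1}{2}\,x\Big).$$
Since $\exp_{z^{2}}$ converts sums to products, $\varphi$ is a group homomorphism from the additive group $(k,+)$ to the multiplicative group $k((z))^\times$. Moreover, from the explicit formula for $c_{V_i}^V$ recalled in the hypothesis of the corollary we have
$$c_{V_i}^V(g,h) \,=\, \exp_{z^{2}}\Big(\tfrac{1}{2}\res_{V_i}^{V}(gdh)\Big) \,=\, \varphi\big(\res_{V_i}^{V}(gdh)\big).$$

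With this identification the statement of the corollary translates verbatim into the statement of Corollary \ref{th:res-genio-rec} applied to $G = k((z))^\times$ and the morphism $\varphi$ above. Since $\{V_i\}_{i\in I}$ is a reciprocity-admissible family, Corollary \ref{th:res-genio-rec} yields
$$\sum_{i\in I}\, \varphi\big(\res_{V_i}^{V}(gdh)\big) \,=\, 0 \ \ \text{in } G,$$
which, because $G$ is written multiplicatively, is precisely
$$\prod_{i\in I} c_{V_i}^V(g,h) \,=\, 1 \ \ \text{in } k((z))^\times.$$

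Finally, the finiteness claim follows immediately: by Corollary \ref{th:res-rec} all but finitely many of the residues $\res_{V_i}^{V}(gdh)$ vanish, and $\varphi(0) = \exp_{z^{2}}(0) = 1$, so all but finitely many of the factors $c_{V_i}^V(g,h)$ equal $1$. The only thing that requires any thought is the verification that $\varphi$ is a genuine group homomorphism out of the additive group of $k$, which is a routine consequence of the definition of $\exp_{z}$ in Subsection \ref{sss:arbitrary-Segal-Wilson}; there is no substantive obstacle, as all the real content has been packaged into Corollary \ref{th:res-genio-rec} and the explicit form of $c_{V_i}^V$.
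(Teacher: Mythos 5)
Your proposal is correct and is essentially identical to the paper's own derivation: the paper obtains this corollary by applying Corollary \ref{th:res-genio-rec} with the group morphism $x \mapsto \exp_{z^{2}}(\tfrac{1}{2}x)$ from $(k,+)$ to $k((z))^\times$, exactly as you do. Nothing further is needed.
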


Finally, if $C$ is a
non-singular and irreducible curve over a field $k$ of characteristic zero, and $\Sigma_C$ is its function field, for each closed
point $x\in C$, we again set $A_x = {\widehat {\O_x}}$ and $K_x = (\widehat {\O_x})_0$.

    For each $g,h \in \Sigma_C$, we set $(g,h)^{SW}_x = c_{A_x}^{K_x} (g,h)\in k((z))^\times$. When $C$ is complete, with similar arguments to above it follows from Corollary \ref{cor:segal-gen-rec-vector}
and Corollary \ref{th:segal-res-genio-rec} that $$\prod_{x\in C} (g,h)^{SW}_x = 1\, ,$$\noindent which is the reciprocity law for the Segal-Wilson pairing offered in \cite{HP}.

\begin{rem}[Final Consideration] It should be noted that the General Reciprocity Law (Theorem \ref{thm:GRLs}) is valid for each vector space over an arbitrary
field and, therefore, it should be possible to study other
reciprocity laws of symbols or to deduce new explicit expressions
in Algebraic Number Theory using this method.
\end{rem}

\bigskip
\bigskip

\end{document}